\let\ifanglais\iftrue
\def\R{{\mathbb R}}
\def\Z{{\mathbb Z}}
\renewcommand{\leq}{\leqslant}
\renewcommand{\geq}{\geqslant}
\theoremstyle{definition} 
 \newtheorem{defi}{Definition}[section]
 \newtheorem{ex}[defi]{Example}
\theoremstyle{plain}      
 \newtheorem{prop}[defi]{Proposition}
 \newtheorem{theo}[defi]{Theorem}
\begin{document}
\title{On the Hilbert Geometry of Convex Polytopes}
\author{{\sl Constantin Vernicos}}

\address{%
  \sl Institut de mathématique et de modélisation de Montpellier\\
  \sl Université Montpellier 2 \\
  \sl Case Courrier 051\\
  \sl Place Eugène Bataillon \\
  \sl F--34395 Montpellier Cedex\\
  \sl France\\
\sl email:\, \tt{Constantin.Vernicos@um2.fr}}
\maketitle

\begin{abstract}
We survey the Hilbert geometry of convex polytopes. In particular we present two important characterisations of 
these geometries, the first one
 in terms of the volume growth of their metric balls, the second one  as a bi-lipschitz class of the
simplexe's geometry.
\end{abstract}

\tableofcontents

\section{Introduction}

Our understanding of the Hilbert geometry associated to the interior of a convex polytope has increased tremendeously 
in the last decade. 
Polytopes play an important role in the realm of Hilbert geometries because they are related to
 linear programming and their geometry is somehow more amenable and simple
than that of general convex sets.

This chapter aims at presenting various results and characterisations of the Hilbert geometry of polytopes. 
For instance their group of isometries is now well understood (see Section \ref{isometries}).
Their volume growth is polynomial and its order characterises them (Section \ref{volumegrowth}).
They are the only Hilbert geometry which can be isometrically embedded in a normed vector space (see Section \ref{embeddings}).
For a given dimension, they all belong to the same bi-Lipschitz class with the Euclidean metric space (see Section \ref{bilipschitz}).

When it was enlightening we dared offer our own proofs on some of the results presented here. Hence one will find
an outline of a new proof that in dimension $2$ the unique Hilbert geometry isometric to a normed vector space is
the one associated to a triangle. 
We also show that any polytope with $N+1$ faces can be isometrically embedded in a normed vector
space of dimension $N$. This last results improves the dimension of the target space which was previously known to be $N(N+1)/2)$
\cite{bclin}, and is more geometric in nature than the previous proofs. 
We end this chapter by outlining a proof of the fact that the Hilbert geometry of a polytope is bi-Lipschitz equivalent to a normed vector space of the same dimension.

\section{Definitions related to Hilbert Geometry, polytopes and notation}

\subsection{Hilbert Geometries}

Let us recall that a Hilbert geometry
$(\mathcal{C},d_\mathcal{C})$ is a non-empty bounded open convex set $\mathcal{C}$
on $\R^n$, that we shall call \textsl{convex domain}, with
the Hilbert distance 
$d_\mathcal{C}$ defined as follows : for any distinct points $p$ and $q$ in $\mathcal{C}$,
the line passing through $p$ and $q$ meets the boundary $\partial \mathcal{C}$ of $\mathcal{C}$
at two points $a$ and $b$, such that someone walking on the line goes consecutively by $a$, $p$, $q$,
$b$ (Figure~\ref{dintro}). Then we define
$$
d_{\mathcal C}(p,q) = \frac{1}{2} \log [a,p,q,b],
$$
where $[a,p,q,b]$ is the cross-ratio of $(a,p,q,b)$, i.e., 
$$
[a,p,q,b] = \frac{\| q-a \|}{\| p-a \|} × \frac{\| p-b \|}{\| q-b\|} > 1,
$$
and $\| \cdot \|$ is the standard Euclidean norm in
$\mathbb R^n$.
   
\begin{figure}[h] 
  \centering
  \includegraphics[scale=.4]{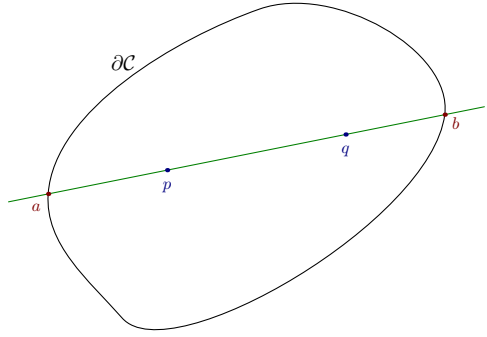}
  \caption{The Hilbert distance \label{dintro}}
\end{figure}

Note that the invariance of cross-ratios under projective maps implies the invariance 
of $d_{\mathcal C}$ by such maps.

These geometries are naturally endowed with
a  continuous Finsler metric $F_\mathcal{C}$ defined as follows: 
if $p \in \mathcal C$ and $v \in T_{p}\mathcal C =\R^n$
with $v \neq 0$, the straight line passing by $p$ and directed by 
$v$ meets $\partial \mathcal C$ at two points $p^{+}$ and
$p^{-}$~; we then define
\begin{equation}
  \label{eqhilbertisfinsler}
  F_{\mathcal C}(p,v) = \frac{1}{2} \| v \| \biggl(\frac{1}{\| p -
  p^{-} \|} + \frac{1}{\| p - p^{+}
  \|}\biggr) \quad \textrm{and} \quad F_{\mathcal C}(p, 0) = 0.
\end{equation}

\begin{figure}[h]
  \centering 
    \includegraphics[scale=.4]{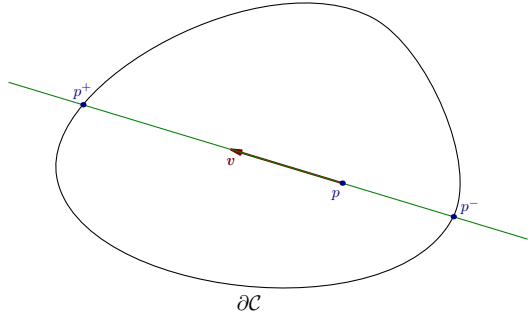}
          \caption{The Finsler structure \label{finslerintro}}
          \end{figure}

The Hilbert distance $d_\mathcal{C}$ is the length distance associated to the Finsler metric $F_{\mathcal C}$.

\subsection{Faces}\label{faces}

To an arbitrary closed convex set $K$ of a real vector space we can associate an equivalence relation, stating
that two points $A$ and $B$ are equivalent if there exists a segment $[C,D]\subset K$ containing
the segment $[A,B]$ such that $C\neq A,B$ and $D\neq A,B$. The equivalence classes are
called \textsl{faces}. A face is called a \textsl{$k$-face} when the dimension
of the affine space it generates is $k$. Observe that a face is always open in the affine space it
generates.
As usual we call \textsl{vertex} a \mbox{$0$-dimensional} face.

In this chapter a \textsl{simplex} in $\R^n$ is the convex closure of $n+1$ affinely independent points, that is a
triangle in $\R^2$, a tetrahedron in $\R^3$, etc. 
More generally, in this survey, a \textsl{polytope} in $\R^n$ will be the convex hull 
of a finite number of points, such that
$n+1$ of them are affinely independent. The $n$-face of a polytope is its interior, in particular it is never empty.

The next definition is due to Benzecri~\cite{benzecri} and plays an important role 
in the study of convex sets.

\begin{defi}[Conical faces]
  Let $\mathcal{C}$ be a convex set in $\R^n$. Let $k<n$.
Suppose that a simplex $S$ contains $\mathcal{C}$ and
that a non-empty $k$-face $f\subset\partial\mathcal{C}$, is included in a $k$-face of $S$. Then we
say that $f$ is a \textsl{conical face} of $\mathcal{C}$ and that $\mathcal{C}$ admits a conical face.
\end{defi}

\begin{figure}[H]
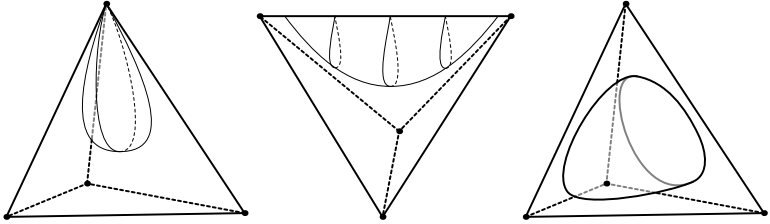

  \centering
  \includegraphics[scale=.37]{polygones.15}  
\includegraphics[scale=.37]{polygones.16}
\includegraphics[scale=.37]{polygones.17}

  \caption{Conical faces in dimension 3}
\label{conicalface}
\end{figure}

When a face $f$ is contained in the boundary of another 
face $F$ we write $f<F$.

\begin{defi}[Conical flag]
Let $\mathcal{C}$ be a convex set in $\R^n$. If there exists a simplex $\mathcal{S}$ contained in $\mathcal{C}$,
and a sequence of faces $(f_i)_{0\leq i\leq n-1}$ such that for any $k=0,\ldots,n-1$,
\begin{enumerate}
\item $\emptyset<f_0<f_1<f_2<\ldots<f_{n-1}<\mathcal{S}$,
\item $f_k$ is a subset of a $k$-conical face of $\mathcal{C}$;
\item no other $k$-face of $\mathcal{S}$ is in the interior of a $k$-conical face of $\mathcal{C}$;
\end{enumerate}
then we  call $f_0<f_1<f_2<\ldots<f_{n-1}<\mathcal{C}$ a \textsl{conical flag} and we say that  $\mathcal{C}$ admits
a \textsl{conical flag}.
Furthermore we will call $\mathcal{S}$ a \textsl{conical flag neighborhood} of $\mathcal{C}$.
\end{defi}

\section{Alternate viewpoints on the Hilbert metric of Polytopes}

We present here two explicit formulas arising from two different viewpoints on the Hilbert metric. The first one is due to Garett~Birkhoff
and the second one to Ralph~Alexander. Both are useful in some applications (see Section \ref{embeddings}).

\begin{prop}[G.~Birkhoff~\cite{birkhoff}]\label{eqbirkhoff}
  \sl Consider a convex polytope ${\mathcal P}$ in $R^n$
defined by the $N$ affine maps $L_1,\ldots,L_N\colon \R^n\to \R$ as follows 
$$
{\mathcal P}=\{x\in \R^n\mid L_i(x)>0, 1\leq i\leq N\}\text{.}
$$ 
Then for any pair of points $(x,y)$ contained in the interior of ${\mathcal P}$ one has
\begin{equation*}
  d_{{\mathcal P}}(x,y)=\frac12 \sup_{1\leq i,j\leq N} \log \biggl( \dfrac{L_i(x)}{L_i(y)}× \dfrac{L_j(y)}{L_j(x)} \biggr)\text{.}
\end{equation*}
\end{prop}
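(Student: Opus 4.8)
The plan is to reduce everything to the one-dimensional affine picture along the line through $x$ and $y$. First I would introduce the affine parametrisation $\ell(t) = x + t(y-x)$, so that $\ell(0) = x$ and $\ell(1) = y$, and observe that each composition $g_i(t) := L_i(\ell(t)) = L_i(x) + t\bigl(L_i(y) - L_i(x)\bigr)$ is an affine function of the single real variable $t$. Since $x$ and $y$ lie in the interior, the set $\{t : g_i(t) > 0 \text{ for all } i\}$ is an open interval $(t_-, t_+)$ with $t_- < 0 < 1 < t_+$, and the boundary points appearing in the definition of $d_{\mathcal P}$ are exactly $a = \ell(t_-)$ and $b = \ell(t_+)$. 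Because $a, x, y, b$ are collinear, the cross-ratio $[a,x,y,b]$ is unchanged if the Euclidean lengths are replaced by differences of the affine parameter $t$; this is the projective invariance of cross-ratios noted just after the definition of $d_{\mathcal C}$.

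Next I would locate $t_\pm$ explicitly. The root of $g_i$ is $t_i = L_i(x)/\bigl(L_i(x) - L_i(y)\bigr)$ whenever $L_i(x) \neq L_i(y)$; if $L_i(y) > L_i(x)$ the constraint $g_i > 0$ holds for $t > t_i$, if $L_i(y) < L_i(x)$ it holds for $t < t_i$, and a constant $g_i$ imposes no restriction. Boundedness of ${\mathcal P}$ forces at least one constraint of each type, so $t_- = \max\{t_i : L_i(y) > L_i(x)\}$ is attained at some index $i_0$ and $t_+ = \min\{t_i : L_i(y) < L_i(x)\}$ at some index $j_0$. Substituting the affine expressions into the parameter form of the cross-ratio and simplifying the two quotients $\tfrac{1 - t_-}{-t_-}$ and $\tfrac{t_+}{t_+ - 1}$ gives, after the routine cancellation,
\begin{equation*}
  [a,x,y,b] = \frac{L_{i_0}(y)}{L_{i_0}(x)}\cdot\frac{L_{j_0}(x)}{L_{j_0}(y)}.
\end{equation*}

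Finally I would show this equals the claimed supremum. Writing $r_i = L_i(x)/L_i(y) > 0$, the quantity under the supremum is $\log r_i - \log r_j$, so the double supremum splits as $\sup_i \log r_i + \sup_j \log\bigl(r_j^{-1}\bigr)$. The step I expect to carry the real weight is the monotone correspondence between the size of $r_i$ and the position of the root $t_i = r_i/(r_i - 1)$: among the decreasing constraints ($r_i > 1$) the smallest root $t_+$ corresponds to the largest $r_i$, while increasing constraints give $r_i < 1$ and constant ones $r_i = 1$, so $\sup_i r_i = r_{j_0}$; dually, among the increasing constraints the largest root $t_-$ corresponds to the smallest $r_j$, whence $\sup_j r_j^{-1} = r_{i_0}^{-1}$. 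Thus the supremum is attained precisely at the pair $(i,j) = (j_0, i_0)$ defining the two boundary points, with value $\log\bigl(r_{j_0} r_{i_0}^{-1}\bigr) = \log[a,x,y,b]$; multiplying by $\tfrac12$ yields Birkhoff's formula, and the case $x = y$ is immediate as both sides vanish. The main obstacle is therefore not the algebra but correctly matching the extremal indices to the faces the segment meets: once the monotonicity of $t_i$ in $r_i$ is pinned down, the identification is forced.
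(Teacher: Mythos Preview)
Your argument is correct and follows the same idea as the paper's sketch. The paper states only that the formula follows from convexity together with Thales's theorem, which gives $\|x-z_i\|/\|y-z_i\| = L_i(x)/L_i(y)$ for the intersection $z_i$ of the line $(xy)$ with the hyperplane $\{L_i=0\}$; your affine parametrisation $\ell(t)=x+t(y-x)$ is precisely the analytic translation of this, and your identities $(1-t_-)/(-t_-)=L_{i_0}(y)/L_{i_0}(x)$ and $t_+/(t_+-1)=L_{j_0}(x)/L_{j_0}(y)$ are Thales's ratios read off in the parameter. The monotonicity step you single out, namely that $t_i=r_i/(r_i-1)$ is strictly decreasing in $r_i$ on each of $(0,1)$ and $(1,\infty)$, is exactly what the paper hides under the word ``convexity'': it ensures that the two faces actually hit by the segment are those realising $\sup_i r_i$ and $\inf_j r_j$. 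So you have written out in full the details the paper leaves implicit, by the same route.
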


This formula is a consequence of the  convexity and 
the fact that if $z_i$ is the intersection of the straight line $(xy)$ with the hyperplane $H_i=\{L_i=0\}$,
then by Thales's theorem we have
$$
\frac{||x-z_i||}{||y-z_i||}= \dfrac{L_i(x)}{L_i(y)}\text{.}
$$ 

In dimension two a second point of view is available: focus on the angle defined by two lines $H_1$ and $H_2$ 
intersecting at a point 
$p$. Consider now two points $x$ and $y$ lying the interior of the same sector $Q$ defined by these two lines
and let $X^*$ and $Y^*$ be the straight lines $(px)$ and $(py)$ respectively.
Assuming that the four lines $H_1$, $X^*$, $Y^*$ and $H_2$ appear in that order
let $h_Q(x,y)$ be their cross-ratio, and define
$$\delta_Q(x,y)=\frac12 \log h_Q(x,y)\text{.}$$
In other words, $\delta_Q$ is the cone-metric associated to the cone $Q$. It is equal to $0$ if $(px)$ and $(py)$ define
the same line.
If $H_i$ is given by the equation $L_i=0$ for $i=1,2$ then we have

\begin{equation}
  \label{eqcrofton}
  \delta_Q(x,y)=\frac12 \Biggr| \log \biggl| \dfrac{L_1(x)}{L_1(y)}× \dfrac{L_2(y)}{L_2(x)} \biggr| \Biggr|\text{.}
\end{equation}

From this last remark we can now state the following proposition, which is a kind of \textsl{Crofton formula}
related to the Hilbert geometry of plane polytopes, \textsl{i.e.}, polygons. 
This means that we relate the length of a segment to a measure on the set of lines crossing 
that segment, see \cite{alvarez_fernandez}. 
 
\begin{prop}[R.~Alexander~\cite{alexander}]\label{alexander}
\sl  Let ${\mathcal P}$ be a polygone with vertices $p_1,\ldots,p_N$, and for $i=1,\ldots,N$ let $Q_i$ be the interior angle defined
at the vertex $p_i$; then the Hilbert distance in ${\mathcal P}$ is given by
$$
d_{{\mathcal P}}(x,y)=\frac12 \sum_{i=1}^n \delta_{Q_i}(x,y)\text{.}
$$
\end{prop}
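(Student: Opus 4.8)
The plan is to reduce both sides to elementary statements about the finite family of quantities $u_i := \log\bigl(L_i(x)/L_i(y)\bigr)$, where I write $\mathcal P=\{L_i>0,\ 1\leq i\leq N\}$ and index the edges cyclically so that the consecutive supporting lines $\{L_i=0\}$ and $\{L_{i+1}=0\}$ (indices mod $N$) meet at the vertex $p_i$. On one hand, Birkhoff's formula (Proposition~\ref{eqbirkhoff}) rewrites as $d_{\mathcal P}(x,y)=\tfrac12\sup_{i,j}(u_i-u_j)=\tfrac12\bigl(\max_i u_i-\min_i u_i\bigr)$. On the other hand, since $\mathcal P$ is contained in the interior angle $Q_i$ at $p_i$, both $x$ and $y$ lie in that sector, and formula~\eqref{eqcrofton} applied to the two lines bounding $Q_i$ gives $\delta_{Q_i}(x,y)=\tfrac12\,|u_i-u_{i+1}|$ (the inner absolute value in \eqref{eqcrofton} is harmless because $L_k>0$ on $\mathcal P$). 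Thus the proposition is equivalent to the purely combinatorial identity
\begin{equation*}
\sum_{i=1}^{N}|u_i-u_{i+1}|=2\Bigl(\max_{1\leq i\leq N}u_i-\min_{1\leq i\leq N}u_i\Bigr),
\end{equation*}
that is, the total variation of the cyclic sequence $(u_i)$ equals twice its range.

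The key step, and the one where convexity really enters, is to show that for every real $c$ the superlevel set $S_c:=\{i: u_i>c\}$ is an arc of consecutive edges (possibly empty or all of $\{1,\dots,N\}$). First I would record that $u_i>c$ is equivalent to $L_i(x)-e^cL_i(y)>0$, and that, writing each $L_i$ as an affine form, one has $L_i(x)-e^cL_i(y)=(1-e^c)\,L_i(z_c)$ for the single point $z_c:=\frac{x-e^cy}{1-e^c}$ (for $c\neq0$; the case $c=0$ follows by continuity). Note that $z_c$ is an affine combination of $x$ and $y$, hence lies on the line $(xy)$. Consequently $S_c=\{i:L_i(z_c)<0\}$ when $c>0$ and $S_c=\{i:L_i(z_c)>0\}$ when $c<0$. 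The desired arc property is then the classical fact that, for a convex polygon and any point $z$ of the plane, the edges whose outer half-plane contains $z$ (the part of $\partial\mathcal P$ facing $z$) form a connected arc, and so does its complement, the hidden arc. This visibility/connectedness statement is exactly the obstacle: it is geometrically evident but genuinely uses convexity, and I would establish it by drawing the two supporting lines to $\mathcal P$ through $z$ and checking that they separate the edges into the two arcs $\{L_i(z)>0\}$ and $\{L_i(z)<0\}$.

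With the arc property in hand, the identity follows by a layer-cake computation. For consecutive indices $|u_i-u_{i+1}|=\int_{\R}\bigl|\,\mathbf{1}\{u_i>c\}-\mathbf{1}\{u_{i+1}>c\}\,\bigr|\,dc$, so summing cyclically and exchanging sum and integral gives $\sum_i|u_i-u_{i+1}|=\int_{\R}V(c)\,dc$, where $V(c)$ counts the indices at which the cyclic $0/1$ word $(\mathbf{1}\{u_i>c\})_i$ changes value. Because $S_c$ is always an arc, $V(c)=2$ when $S_c$ is nonempty and proper and $V(c)=0$ otherwise; and $S_c$ is nonempty and proper precisely for $\min_i u_i<c<\max_i u_i$. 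Hence $\int_\R V(c)\,dc=2(\max_iu_i-\min_iu_i)$, which is the required identity and completes the proof. The only real content is the arc lemma of the second paragraph; everything else is bookkeeping around Birkhoff's and Alexander's formulas.
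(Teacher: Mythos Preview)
Your argument is correct. The paper's own justification is a one-line telescoping hint: the line $(xy)$ splits the vertex set into two arcs, and summing the $\delta_{Q_i}$ over either arc already yields $d_{\mathcal P}(x,y)$; hence the full sum equals $2d_{\mathcal P}(x,y)$. To make that hint rigorous one must know that along each arc the cross-ratios multiply without cancellation, i.e.\ that the sequence $u_a,u_{a+1},\dots,u_b$ is monotone so that $\sum_{i=a}^{b-1}|u_i-u_{i+1}|$ honestly telescopes to $u_b-u_a$. This cyclic unimodality of $(u_i)$ is precisely your statement that every superlevel set $S_c$ is an arc of consecutive edges, so the two proofs rest on the same convexity fact. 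You package it differently: instead of splitting by the transversal $(xy)$ and telescoping, you pass through Birkhoff's formula, recast the proposition as the total-variation identity $\sum_i|u_i-u_{i+1}|=2(\max_i u_i-\min_i u_i)$, and then prove the arc property via the auxiliary points $z_c=\frac{x-e^{c}y}{1-e^{c}}$ on $(xy)$ together with the visible-edge lemma for convex polygons. Your route is longer but makes the role of convexity completely explicit and yields a clean layer-cake computation; the paper's route is shorter but leaves the monotonicity step to the reader.
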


To obtain this formula, it suffices to sum up the $\delta_{Q_i}$'s corresponding to the vertices lying on one side of
the straight line $(xy)$, which gives $d_{{\mathcal P}}(x,y)$.
A somewhat similar description in higher dimensions has been given by Rolf~Schneider~\cite{schneider}.

\section{The group of isometries versus collineations for a polytopal Hilbert geometry}\label{isometries}

Let $\sigma_{n+1}$ be the group of permutations on the set $\{1,\ldots,n+1\}$ and
let us denote by $\Gamma_{n+1}=\sigma_{n+1}×\sigma_2=\sigma_{n+1}×\Z/2\Z$.

Let us remind the reader that a \textsl{collineation}  is a  bijection from one 
projective space to another, or from a projective space to itself, such that the images 
of collinear points are themselves collinear.
A \textsl{homography} is an isomorphism of projective spaces, 
induced by an isomorphism of the vector spaces from which they are derived.
Homographie are collineations but in general not all collineations are homographies.
However the \textsl{fundamental theorem of projective geometry} asserts 
that in the case of \textsl{real} projective spaces of dimension at least two a collineation is a homography.

\begin{theo}
\sl  Let $({\mathcal P},d_{\mathcal P})$ be a polytopal Hilbert geometry of dimension $n$.
Then the group of isometries is isomorphic to 
\begin{enumerate}
\item $\R^n \rtimes \Gamma_{n+1}$ if ${\mathcal P}$ is a simplex;
\item the group of collineations of ${\mathcal P}$ otherwise.
\end{enumerate}
\end{theo}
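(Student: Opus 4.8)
The plan is to determine the full isometry group of a polytopal Hilbert geometry by first identifying the collineations that preserve the polytope, then showing that for non-simplices every isometry must in fact be such a collineation, and finally treating the simplex as the exceptional case where extra isometries appear. The guiding principle is a boundary-rigidity argument: a Hilbert isometry must respect the intrinsic metric geometry near the boundary, and the facial structure of a polytope (its vertices, edges, and higher faces) is encoded in the asymptotic behaviour of the metric. So my first step would be to recall why every collineation of $\mathcal{P}$ onto itself is automatically an isometry: this is immediate from the fact that $d_{\mathcal{P}}$ is defined through a cross-ratio and cross-ratios are preserved by projective maps, a point already noted in the excerpt just after the definition of $d_{\mathcal{C}}$. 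Thus the collineation group always embeds into the isometry group, and the content of the theorem is the reverse inclusion for non-simplices.

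The heart of the proof is therefore to show that an arbitrary isometry $\varphi$ of $(\mathcal{P},d_{\mathcal{P}})$ is induced by a collineation. The strategy I would follow is to reconstruct the projective structure from the metric. The key observation is that the metric geometry detects geodesics: in a Hilbert geometry the Euclidean straight segments are always geodesics, and an isometry must send geodesics to geodesics. The subtlety is that in a general Hilbert geometry there can be geodesics that are \emph{not} straight segments (this happens precisely when the boundary contains line segments, which is always the case for a polytope). So I cannot simply say ``$\varphi$ preserves geodesics, hence $\varphi$ is a collineation by the fundamental theorem of projective geometry.'' Instead I would characterise, purely metrically, those geodesics that are straight segments — for instance as the \emph{unique} geodesics between their endpoints, or equivalently through a convexity/uniqueness property of the metric midpoint — and argue that an isometry preserves this distinguished class. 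Once $\varphi$ is known to map straight segments to straight segments, it maps lines to lines, and the fundamental theorem of projective geometry (invoked in the excerpt, valid since $n\geq 2$) upgrades $\varphi$ to a homography of $\R\mathbb{P}^n$, which must then preserve $\mathcal{P}$ and hence be a collineation of $\mathcal{P}$.

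The main obstacle is exactly this separation of ``straight'' from ``bent'' geodesics, and it is genuinely dimension- and shape-dependent: the set of pairs of points joined by a unique geodesic reflects the combinatorics of the boundary faces. The natural tool here is the two explicit metric formulas already available, namely Birkhoff's formula (Proposition~\ref{eqbirkhoff}) and, in the plane, Alexander's Crofton-type formula (Proposition~\ref{alexander}). Birkhoff's formula expresses $d_{\mathcal{P}}$ as a supremum over pairs of defining hyperplanes, and the locus where the supremum is attained by a single pair $(i,j)$ versus a higher-dimensional set governs the uniqueness of geodesics; I would use this to read off which segments are the unique geodesics between their endpoints and to detect the facial structure metrically. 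This is where the simplex becomes special: a simplex has exactly $n+1$ faces and its Hilbert geometry is isometric to a normed space (as stated in the introduction), which creates additional geodesics and hence additional isometries — the translations $\R^n$ and the symmetries $\Gamma_{n+1}=\sigma_{n+1}\times\sigma_2$ of its defining simplex together with the duality. For the simplex I would therefore argue separately, computing the isometry group of the associated normed space directly and matching it with $\R^n\rtimes\Gamma_{n+1}$, while for all other polytopes the reduced symmetry forces every isometry to preserve the straight segments and thus to be a collineation.
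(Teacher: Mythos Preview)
Your approach diverges from the paper's (which follows Lemmens--Walsh via the horofunction boundary and the \emph{detour metric}), and it contains a genuine gap at the step you yourself flag as the ``main obstacle.''

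The proposed metric characterisation of straight segments --- ``straight geodesic $=$ unique geodesic between its endpoints'' --- fails in a polytope. If the line $(pq)$ meets $\partial\mathcal{P}$ in the relative interiors of two facets $F_a$ and $F_b$ (which is the generic situation), then both $a$ and $b$ lie on open segments in $\partial\mathcal{P}$, and one can always find a $2$-plane through $(pq)$ that cuts both facets in segments; by de~la~Harpe's criterion this produces non-straight geodesics from $p$ to $q$. So for a polytope the pairs with a unique geodesic form a thin, non-open set, and you have given no argument that an isometry must preserve this thin set or that the set is rich enough to force collinearity everywhere. Invoking Birkhoff's formula to ``read off which segments are the unique geodesics'' does not fill this gap: the formula tells you where the maximum over pairs $(i,j)$ is attained, but attainment by a single pair does \emph{not} imply uniqueness of the geodesic (it only constrains the direction, not the boundary behaviour at both ends). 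Without a working metric characterisation of straight lines, the appeal to the fundamental theorem of projective geometry never gets off the ground.

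The paper's outline avoids this difficulty entirely. Rather than trying to recognise straight lines inside $\mathcal{P}$, Lemmens and Walsh pass to the horofunction (Busemann) boundary, equip it with the detour metric, and show that this metric separates the boundary into \emph{parts} corresponding to vertices and to facets. An isometry of $(\mathcal{P},d_{\mathcal{P}})$ induces an isometry of this boundary, hence either respects the vertex/facet dichotomy or swaps the two kinds of parts. In the first case the isometry extends continuously to $\partial\mathcal{P}$ and is then shown to be a collineation; in the second case the combinatorics force $\mathcal{P}$ to be a simplex. Your direct computation of the simplex case via the normed-space model is fine and matches the known result, but the non-simplex case needs the boundary machinery (or some substitute of comparable strength) that your proposal does not supply.
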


The first part of this theorem was known, see for instance Pierre~de la 
Harpe~\cite{dlharpe}. The second part of the 
theorem is due to Bas Lemmens and Cormac Walsh~\cite{lw} following ideas of
 Cormac~Walsh~\cite{walsh} (see also
his contribution to this handbook \cite{walsh2}).

Their main idea is to study the  Horoboundary of a Polytopal Hilbert geometry. They show that
one can define a metric, the detour metric, between Busemann points which extends somehow the Hilbert metric.
For this metric the Busemann boundary is divided into different parts. Two points between different parts
are at an infinite distance, in particular Busemann points related to two different faces of the polytope
are not in the same part.
Then their strategy consists in proving the following facts: Given  an isometry $f$ between two polytopal Hilbert
geometries:
\begin{enumerate}[(i)]
\item The map $f$ defines an isometry between their respective Busemann boundaries
endowed with the detour metric;
\item either $f$ maps vertex parts to vertex parts and faces to faces or $f$ interchanges them;
\item if $f$ maps vertex parts to vertex parts, then $f$ extends continuously to the boundary;
\item if $f$ extends continuously to the boundary, then it is a collineation;
\item if $f$ interchanges vertex parts and faces, then the two polytopes are simplices.
\end{enumerate}

Notice that although it is now known that  for a general 
Hilbert geometry the group of isometries is a Lie group (see  L.~Marquis' contribution to this handbook~\cite{marquis}), 
it is still not known when it coincides with its group of collineations.

\section{Characterisation by volume growth}\label{volumegrowth}

\begin{theo}
\sl  Let $(\Omega,d_{\Omega})$ be a Hilbert geometry of dimension $n$. Let  $\mathrm{Vol}$ be its Hausdorff or Holmes-Thompson measure and let $B_\Omega(o,R)$ be the metric ball of radius $R$ centerd at $o$. Then the upper asymptotic volume, which is defined as
$$
\overline{\mathrm{Asvol}}\bigl(\Omega\bigr)=\limsup_{R\to +\infty} \dfrac{\mathrm{Vol}\bigl(B_\Omega(o,R)\bigr)}{R^n}\text{,}
$$ 
is finite if and only if $\Omega$ is a polytope.
\end{theo}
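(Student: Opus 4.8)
The plan is to translate the problem entirely into the Euclidean convex geometry of $\Omega$ near its boundary, through the volume density of the Hilbert metric, and then to read off the growth from the local flatness of $\partial\Omega$. First I would note that the Busemann--Hausdorff and Holmes--Thompson densities are comparable up to constants depending only on $n$ (their unit balls are convex bodies of bounded eccentricity), so the finiteness of $\overline{\mathrm{Asvol}}$ is insensitive to the choice of measure and I may fix the Hausdorff one. Writing $\mu_\Omega(p)=\omega_n/\mathrm{vol}(B_p)$ for the density, where $B_p=\{v:F_\Omega(p,v)\le 1\}$ is the Finsler unit ball at $p$ and $\omega_n$ the Euclidean unit-ball volume, the definition of $F_\Omega$ shows that $B_p$ has radius $2r^+(u)r^-(u)/(r^+(u)+r^-(u))$ in each direction $u$, with $r^\pm(u)=\|p-p^\pm\|$. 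Since the metric balls $B_\Omega(o,R)$ are convex and exhaust $\Omega$, I would express $\mathrm{Vol}(B_\Omega(o,R))=\int_{B_\Omega(o,R)}\mu_\Omega\,d\mathrm{vol}_e$ and analyse how $\mu_\Omega$ blows up as $p\to\partial\Omega$, the rate being governed by how fast the transverse cross-sections of $\Omega$ collapse.

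For the direct implication (polytope $\Rightarrow$ finite) I would begin with the simplex, whose Hilbert geometry is isometric to a normed vector space of the same dimension $n$. As Hausdorff measure is a metric invariant, the Hilbert balls correspond to the scaled normed balls $R\cdot B$, so $\mathrm{Vol}(B_\Omega(o,R))=R^n\,\mathrm{Vol}(B)$ exactly and $\overline{\mathrm{Asvol}}$ is finite; the Holmes--Thompson case follows by comparability. For a general polytope one may either invoke the bi-Lipschitz equivalence with the simplex of Section~\ref{bilipschitz}, which distorts both distances and Hausdorff measure by bounded factors and hence preserves the degree of polynomial growth, or argue directly: Birkhoff's description (Proposition~\ref{eqbirkhoff}) presents $\Omega$ locally as a finite intersection of half-spaces, so near every boundary point the cross-sections stay bounded below and $\mu_\Omega(p)$ is controlled by $\prod_i \mathrm{dist}(p,H_i)^{-1}$ over the facets $H_i$ active at the nearest face. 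Integrating this density over the exhausting balls — and noting that the ball spreads linearly in $R$ along each face while only reaching depth $\mathrm{dist}\asymp e^{-2R}$ transversally — produces, after summing the contributions of the faces of each dimension, a polynomial in $R$ of degree exactly $n$, which bounds $\overline{\mathrm{Asvol}}$.

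For the converse (not a polytope $\Rightarrow$ infinite) I would argue by contraposition through the same density. A compact convex body is a polytope precisely when it has finitely many extreme points, equivalently when $\partial\Omega$ is a finite union of flat faces; if $\Omega$ is not a polytope I would extract a boundary point $p$ near which $\Omega$ fails to be a finite intersection of half-spaces, so that along an inward sequence $x_k\to p$ with $\mathrm{dist}(x_k,\partial\Omega)=\varepsilon_k\to 0$ the transverse width collapses and $\mathrm{vol}(B_{x_k})=o(\varepsilon_k)$. Then $\mu_\Omega(x_k)$ exceeds $\varepsilon_k^{-1}$ by a factor tending to infinity, and feeding this into the volume integral over a fixed neighbourhood of $p$, while matching the constraint $\varepsilon\asymp e^{-2R}$ relating the achievable depth to the radius $R$, forces $\mathrm{Vol}(B_\Omega(o,R))/R^n\to\infty$.

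The hard part will be this converse, and specifically two points. The first is to turn the qualitative statement \emph{``$\Omega$ is not a polytope''} into a usable quantitative one: a non-polytope may still have zero volume entropy (for instance, infinitely many facets of rapidly decreasing size), so the coarse exponential-growth dichotomy is useless and one must genuinely control the polynomial coefficient, which requires a careful structural analysis of $\partial\Omega$ to locate a region whose cross-sections provably shrink at a super-flat rate. The second is the local-to-global step: converting the pointwise blow-up of $\mu_\Omega$ into a bona fide lower bound on $\mathrm{Vol}(B_\Omega(o,R))$, which means correctly pairing the depth $\varepsilon$ with the radius $R$, integrating over the right Euclidean region, and summing over dyadic scales without losing the gained factor, all with constants depending on $n$ alone so that the conclusion holds simultaneously for the Hausdorff and the Holmes--Thompson measures.
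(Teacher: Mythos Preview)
Your forward direction is fine; invoking the bi-Lipschitz equivalence with $(\R^n,\|\cdot\|)$ (or going through the simplex) is a legitimate shortcut, and the direct density integration over the face stratification also works.

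For the converse the paper takes a genuinely different and much shorter route than your density analysis. It does not study $\mu_\Omega$ near $\partial\Omega$ at all. Instead it proves the quantitative lower bound (Theorem~\ref{lowerboundasymptoticvolume}): there is a dimensional constant $a_n$ with $\overline{\mathrm{Asvol}}(\Omega)\ge a_n\cdot k$ whenever $\partial\Omega$ has $k$ extremal points. The argument is a metric packing. From the asymptotic distance formula~\eqref{eqlimit}, geodesic rays $x^1(t),\dots,x^k(t)$ from $o$ to $k$ distinct extremal points satisfy $d_\Omega(x^i(t),x^j(t))\sim 2t$, so for $R$ large the balls $B_\Omega\bigl(x^i(3R/4),R/4\bigr)$ are pairwise disjoint and contained in $B_\Omega(o,R)$. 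Coupling this with the universal lower bound $\mathrm{Vol}\bigl(B_\Omega(p,r)\bigr)\ge b_n r^n$, valid in every Hilbert geometry, yields $\mathrm{Vol}\bigl(B_\Omega(o,R)\bigr)\ge k\,b_n(R/4)^n$. If $\Omega$ is not a polytope it has infinitely many extremal points, and letting $k\to\infty$ gives $\overline{\mathrm{Asvol}}(\Omega)=\infty$.

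This sidesteps exactly the two obstacles you identify. No structural analysis of how $\partial\Omega$ fails to be flat is needed, and no integration of a blowing-up density against a carefully matched region is performed; the only inputs are~\eqref{eqlimit} and the uniform ball lower bound, both purely metric. Your approach is not wrong in spirit, but note that the claimed pointwise estimate $\mathrm{vol}(B_{x_k})=o(\varepsilon_k)$ is not automatic for an arbitrary non-polytope: a convex body can have infinitely many flat facets accumulating at a single point, and near the interior of each facet one still has $\mathrm{vol}(B_p)\asymp\varepsilon$, so the ``bad'' behaviour is concentrated at the accumulation locus and must be extracted with care. The packing argument converts ``not a polytope'' directly into ``infinitely many extremal points'' and avoids this issue entirely.
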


The fact that the upper asymptotic volume of a polytopal Hilbert geometry is finite
was proved by the author in \cite{ver9}.
The converse is also due to the author and a complete proof can be found in \cite{ver10}.
More precisely, in that paper we prove the following lower bound on the asymptotic volume:

\begin{theo}\label{lowerboundasymptoticvolume}
\sl There exists a constant $a_n$ such that for
any Hilbert geometry $(\Omega,d_\Omega)$ of dimension $n$ which 
admits $k$ extremal points one has
$$
a_n\cdot k\leq \overline{\mathrm{Asvol}}\bigl(\Omega\bigr)\text{.}
$$
\end{theo}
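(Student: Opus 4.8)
The plan is to localise the volume growth near each extremal point and to show that every extremal point forces a definite, dimension-dependent amount of asymptotic volume, these amounts simply adding up over distinct extremal points. Fix $k$ extremal points $p_1,\dots,p_k$ of $\Omega$ and, since they are distinct, choose $\rho>0$ smaller than half the least distance between them; let $U_i$ be the open Euclidean ball of radius $\rho$ about $p_i$, so that the $U_i$ are pairwise disjoint. Writing $W_i(R)=B_\Omega(o,R)\cap U_i$, the sets $W_i(R)$ are pairwise disjoint subsets of $B_\Omega(o,R)$, whence $\mathrm{Vol}(B_\Omega(o,R))\ge\sum_{i=1}^k\mathrm{Vol}(W_i(R))$. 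Everything then reduces to proving, for each $i$, a local lower bound $\liminf_{R\to\infty} R^{-n}\mathrm{Vol}(W_i(R))\ge a_n$ with $a_n>0$ independent of $\Omega$ and of $i$.

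The heart of the argument is to replace $\Omega$, in a neighbourhood of $p_i$, by its tangent cone $K_i=T_{p_i}\Omega$, which is a proper (line-free) convex cone precisely because $p_i$ is extremal. As $x\to p_i$ the boundary of $\Omega$ osculates $\partial K_i$, so the Finsler metrics, and hence the Hilbert volume densities, of $\Omega$ and $K_i$ agree asymptotically on $U_i$; likewise the part of the metric sphere of radius $R$ lying in $U_i$ converges, after the natural rescaling that turns $R\to\infty$ into the approach $x\to p_i$, to the corresponding object for the homogeneous model $K_i$. Consequently the local contribution $\liminf_{R\to\infty} R^{-n}\mathrm{Vol}(W_i(R))$ equals a quantity $\mathrm{asvol}(K_i)$ depending only on the tangent cone, and the problem is reduced to the model computation: show that every proper convex cone $K$ in $\R^n$ satisfies $\mathrm{asvol}(K)\ge a_n$ for a universal constant $a_n$. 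The natural candidate for the extremal case is the simplicial cone, all such cones being projectively equivalent, so that $a_n$ may be taken to be the finite positive value $\mathrm{asvol}$ attached to a simplicial cone, which one computes explicitly using the isometry of the simplex's Hilbert geometry with a normed vector space.

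I expect the main obstacle to lie exactly in the passage from $\Omega$ to the model cone. The difficulty is structural: a lower bound on the volume density forces one to \emph{circumscribe} $\Omega$ by $K_i$, since $\Omega\subset K_i$ yields $F_\Omega\ge F_{K_i}$ and hence $\mathrm{Vol}_\Omega\ge\mathrm{Vol}_{K_i}$, whereas a lower bound on the \emph{size} of the region $W_i(R)$ forces one to \emph{inscribe}, since one must know that enough of the $K_i$-ball is actually reached by the $\Omega$-metric. These two requirements pull in opposite directions and can be reconciled only by exploiting that $\Omega$ and $K_i$ osculate to first order at $p_i$, so that the two Hilbert metrics differ by a multiplicative factor tending to $1$ as $x\to p_i$. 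Making this uniform enough to control density and ball simultaneously, and verifying that the simplicial cone indeed minimises $\mathrm{asvol}$ among all proper cones, are the two points demanding real work; granting them, summing the $k$ local contributions yields $a_n\cdot k\le\overline{\mathrm{Asvol}}\bigl(\Omega\bigr)$.
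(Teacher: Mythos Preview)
Your approach has a genuine gap at its very core. The assertion that the tangent cone $K_i=T_{p_i}\Omega$ is a proper (line-free) cone ``precisely because $p_i$ is extremal'' is false. Take $\Omega$ to be a Euclidean ball: every boundary point is extremal, yet the tangent cone at each of them is a closed half-space, which contains lines in abundance. More generally, at any $C^1$ boundary point the tangent cone is a half-space. On a half-space the Hilbert construction yields only a degenerate pseudometric (it vanishes along directions parallel to the bounding hyperplane), so it cannot serve as a comparison model, and your quantity $\mathrm{asvol}(K_i)$ is not even well defined there.

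The error is not repairable by a small adjustment, because your osculation claim fails at the same points. Near a smooth extremal point the Hilbert metric of $\Omega$ is governed by the \emph{second}-order behaviour of $\partial\Omega$ (curvature), not by the first-order tangent cone; indeed for a $C^2$ strictly convex boundary the local geometry is asymptotically hyperbolic, which is as far from the degenerate half-space model as possible. So $F_\Omega$ and $F_{K_i}$ do \emph{not} become comparable as $x\to p_i$, and the whole localisation-to-cone scheme collapses for precisely the extremal points one most needs to handle when $\Omega$ is not a polytope.

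The paper's proof bypasses cones entirely and is both shorter and more robust. It rests on two ingredients: first, a uniform lower bound $\mathrm{Vol}\bigl(B_\Omega(x,r)\bigr)\ge b_n r^n$ valid in every Hilbert geometry; second, the divergence estimate~(\ref{eqlimit}), which says that geodesic rays from a common basepoint $o$ towards two distinct extremal points satisfy $d_\Omega\bigl(x(t),y(t)\bigr)/2t\to 1$. The second fact lets one place, for all large $R$, pairwise disjoint balls of radius $R/4$ centred at the points $x^i(3R/4)$ on the $k$ rays, all sitting inside $B_\Omega(o,R)$; the first fact then gives $\mathrm{Vol}\bigl(B_\Omega(o,R)\bigr)\ge k\, b_n(R/4)^n$, hence $a_n=b_n/4^n$ works.
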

Hence having finite asymptotic volume implies that the convex set $\Omega$ has a finite number of 
extremal points and therefore is a polytope.

The proof of Theorem \ref{lowerboundasymptoticvolume} relies on the identity (\ref{eqlimit}) below
and on the fact that the measure of balls of radius $R$ have a uniform lower bound
of the form $b_nR^n$, for some constant $b_n$ depending only on the dimension (see \cite{ver10}).
Then it suffices to include in a ball of radius $R$, $k$-disjoint balls of radius $R/4$, centered on the 
geodesic rays joining the center of the ball to the extremal points. This is possible precisely thanks to the
the equality (\ref{eqlimit}).

Notice that finding a similar upper bound as in Theorem \ref{lowerboundasymptoticvolume} is not yet done, and
would probably solve the entropy upper-bound conjecture using the methods developed by the author
 in \cite{ver11}
to prove it in dimensions $2$ and $3$.

\section{Characterisation by isometric embedding}\label{embeddings}

\subsection{The special case of simplices}

\begin{theo}\label{lecasdessimplexes}
\sl  Let $(\Omega,d_\Omega)$ be a Hilbert geometry. It is isometric to a normed
vector space if and only if $\Omega$ is projectively equivalent to a simplex.
\end{theo}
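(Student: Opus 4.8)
The plan is to prove the two implications separately: for a simplex I would construct the isometry explicitly, while for the converse I would reduce everything to the two structural theorems of the preceding sections. Since the Hilbert metric is a projective invariant, it suffices for the ``if'' direction to treat the standard simplex $\Delta=\{x\in\mathbb{R}^{n+1}: x_i>0,\ \sum_i x_i=1\}$, realised by the forms $L_i(x)=x_i$. By Birkhoff's formula (Proposition~\ref{eqbirkhoff}),
$$
d_\Delta(x,y)=\tfrac12\sup_{i,j}\log\frac{x_i\,y_j}{x_j\,y_i}.
$$
I would introduce the logarithmic chart $\Phi\colon\Delta\to\mathbb{R}^{n+1}/\mathbb{R}\mathbf{1}$, $\Phi(x)=[\log x_1,\ldots,\log x_{n+1}]$ with $\mathbf{1}=(1,\ldots,1)$, and equip the quotient with
$$
\bigl\|[u]\bigr\|=\tfrac12\bigl(\max_i u_i-\min_i u_i\bigr),
$$
which is a genuine norm because the seminorm on the right vanishes exactly on $\mathbb{R}\mathbf{1}$. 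Setting $w_i=\log x_i-\log y_i$, the Birkhoff expression becomes $\tfrac12(\max_i w_i-\min_i w_i)=\|\Phi(x)-\Phi(y)\|$, so $\Phi$ is an isometry; it is bijective since $[u]$ is attained at $x_i=e^{u_i}/\sum_k e^{u_k}$. Hence the simplex is isometric to $(\mathbb{R}^n,\|\cdot\|)$, and the same holds after any projective change of coordinates.

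For the ``only if'' direction I would argue by reduction. First, a normed space $(\mathbb{R}^n,\|\cdot\|)$ has balls $B(o,R)=R\cdot B(o,1)$, so $\mathrm{Vol}\bigl(B(o,R)\bigr)/R^n$ is constant and the upper asymptotic volume is finite; by the volume-growth characterisation (Section~\ref{volumegrowth}) the domain $\Omega$ must therefore be a polytope. Secondly, a normed space is homogeneous, its translations acting simply transitively by isometries, so $\mathrm{Isom}(\Omega,d_\Omega)$ acts transitively on $\Omega$. Invoking the classification of the isometry group of a polytopal Hilbert geometry (Section~\ref{isometries}), either $\Omega$ is a simplex, or $\mathrm{Isom}(\Omega,d_\Omega)$ coincides with the group of collineations of $\Omega$. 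It remains to exclude the second alternative by showing that the collineation group of a polytope that is not a simplex cannot act transitively on its interior.

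This last point is where I expect the real obstacle to be. A collineation preserving a polytope permutes its faces, hence its facet-hyperplanes, so the group maps to the finite automorphism group of the face lattice and its identity component fixes every facet-hyperplane. I would then estimate the dimension of this stabiliser: for a simplex the $n+1$ facet-hyperplanes are in general position and their common projective stabiliser is the $n$-dimensional diagonal group, which is precisely what produces transitivity; for a polytope with $N>n+1$ facets there are strictly more linear conditions, and I would argue the stabiliser has dimension strictly less than $n$, so that no $n$-dimensional orbit can occur and homogeneity fails. The delicate part is exactly this dimension count, because the facet-hyperplanes of a general polytope need not be in general position; making it rigorous amounts to the statement that the simplex is the only projectively homogeneous polytopal domain, in accordance with Vinberg's classification of homogeneous convex cones. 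Granting this, the second alternative is impossible and $\Omega$ must be a simplex, which completes the proof.
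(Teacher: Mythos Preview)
Your ``if'' direction is the same construction the paper uses (the de~la~Harpe logarithmic map, here carried out in arbitrary dimension via Birkhoff's formula), and it is correct.

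Your ``only if'' direction takes a genuinely different route. The paper only \emph{illustrates} the converse in dimension~$2$: after reducing to a polygon via volume growth (as you do), it compares the asymptotic length of metric spheres---$cR$ with $6\le c\le 8$ in a normed plane versus $2nR$ in an $n$-gon---to force $n\in\{3,4\}$, and then discards the square by observing that its Finsler unit ball changes combinatorial type from point to point. For the general statement the paper simply cites Foertsch--Karlsson, whose argument goes through linearity of Busemann functions rather than through isometry groups. Your strategy instead feeds homogeneity into the Lemmens--Walsh classification of $\mathrm{Isom}(\mathcal P,d_{\mathcal P})$; this is dimension-independent but leans on a considerably heavier theorem than anything the paper invokes in its $2$-dimensional sketch.

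Regarding the gap you flag: it closes more easily than your appeal to Vinberg suggests, and the facet-hyperplane dimension count is not where you should look. Work with vertices instead. The identity component $G_0$ of the collineation group permutes the finite vertex set trivially, hence fixes every vertex. A bounded $n$-polytope that is not a simplex has at least $n+2$ vertices; pick $n+1$ of them that are affinely independent and use them as a projective frame, so that $G_0$ sits inside the diagonal torus of $PGL(n+1)$, which has dimension~$n$. Any further vertex $v=[a_0:\cdots:a_n]$ has at least two nonzero coordinates (otherwise it would coincide with a frame vertex), and the condition that a diagonal element fix $v$ forces those coordinates to scale identically; this cuts the dimension of $G_0$ by at least one. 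Hence $\dim G_0\le n-1$, so $G_0$ cannot act transitively on the $n$-dimensional interior, and since $G/G_0$ is finite the full collineation group cannot either. With this paragraph in place your argument is complete.
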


The ``if'' part was proved by Roger Nussbaum~\cite{nussbaum} and Pierre~de la~Harpe~\cite{dlharpe}. 
The ``only if'' is due to Thomas Foertsh and Anders Karlsson \cite{fk}.

Let us illustrate the two-dimensional case with an \textsl{ad hoc} proof not requiring the technicality
of \cite{fk}.
Let $e_1$, $e_2$ and $e_3$ be an affine basis of an affine plane;
then the convex hull of these three points is a two-dimensional simplex $\mathcal{S}_2$.

Now using the barycentric coordinates attached to the family $(e_i)_{1\leq i\leq3}$, 
each point $p$ in the interior of the simplex 
is uniquely associated to a triple of positive real numbers $\alpha_1,\alpha_2,\alpha_3$ such that $\sum_i\alpha_i=1$ and
$p=\sum_i\alpha_i e_i$.
Therefore, one can define a map from $\mathcal{S}_2$ to 
the plane $\bigl\{x+y+z=0\bigr\}$ of $\R^3$  by
\begin{equation}
  \label{eqformultriangle}
  \Phi_2(p)=\biggl(\log \frac{\alpha_1}{\alpha_2}, \log \frac{\alpha_2}{\alpha_3}, \log \frac{\alpha_3}{\alpha_1}\biggr)\text{.}
\end{equation}

This map is easily seen to be a bijection whose inverse is
\begin{equation}
  \label{eqinverseformuletriangle}
  \Phi_2^{-1}(x,y,z)= \frac{1}{e^x+e^{x+y}+1}(e^{x+y},e^y,1)\text{.}
\end{equation}

Finally, if $\R^3$ is endowed with the sup norm, then this map is an isometry.

Now, the intersection of the unit cube of $\R^3$ with the plane $x+y+z=0$ is
a regular hexagon, and therefore we deduce from this that the Hilbert geometry
of a simplex is isometric to $\R^2$ endowed with a norm whose unit ball is a regular hexagon.

Conversely, and without loss of generality, suppose that $\Omega$ is a planar  
bounded convex set whose Hilbert geometry is isometric to a two-dimensional normed vector space. 
Since its volume growth being polynomial
of order two,  it follows, from \cite{ver10}, that 
$\Omega$ is necessarily a polygon. Besides, in dimension $2$, the length of a sphere
of radius $R$ in a normed vector space is $c× R$ for a constant $6\leq c\leq8$. 
However in a polygon with $n$
vertices a simple computation shows that as $R$ goes to infinity, the length of a sphere of
radius $R$ is equivalent to $2n× R$. Hence, $n=3$ or $4$. 
The case $n=4$ would mean that $\Omega$ is a convex quadrilateral, which is projectively equivalent to
a square; however the square is not isometric to a normed vector space, as in the center the finsler norm is a square, 
on the diagonals an hexagon and elsewhere an octagon.
Therefore, $n=3$ and $\Omega$ is a triangle.

\subsection{Isometric embeddings of polytopes}
\begin{theo}\label{plongementisometrique}
\sl For a  convex domain $\Omega \subset \mathbb{R}^n$,
the following conditions are equivalent:
\begin{enumerate}[(a)]
  \item $\Omega$ is a bounded polytope. 
  \item The Hilbert geometry $(\Omega, d_{\Omega})$  can be isometrically embedded  in a
  finite dimensional normed vector space 
\end{enumerate}
\end{theo}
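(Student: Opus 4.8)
The plan is to establish the two implications separately: the direction (a)$\Rightarrow$(b) I would carry out explicitly by means of Birkhoff's formula, while for (b)$\Rightarrow$(a) I would argue by contraposition, importing the packing construction underlying the volume-growth results of Section~\ref{volumegrowth}.

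For (a)$\Rightarrow$(b) I start from Proposition~\ref{eqbirkhoff}. Writing the polytope as $\Omega=\{x:L_i(x)>0,\ 1\le i\le M\}$, where $M$ is its number of facets, I set
\[
\Phi\colon\Omega\to\R^M,\qquad \Phi(x)=\bigl(\log L_1(x),\dots,\log L_M(x)\bigr),
\]
so that, writing $u=\Phi(x)-\Phi(y)$, Birkhoff's identity becomes
\[
d_\Omega(x,y)=\tfrac12\Bigl(\max_{1\le i\le M}u_i-\min_{1\le j\le M}u_j\Bigr).
\]
The next step is the elementary remark that the \emph{range} functional $\rho(u)=\tfrac12(\max_i u_i-\min_j u_j)$ is a seminorm on $\R^M$ whose kernel is exactly the diagonal line $\R\cdot(1,\dots,1)$: subadditivity follows from $\max(u+v)\le\max u+\max v$ together with the dual inequality for the minimum, homogeneity is immediate, and $\rho(u)=0$ forces all coordinates of $u$ equal. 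Passing to the quotient $V=\R^M/\R\cdot(1,\dots,1)$, the functional $\rho$ descends to a genuine norm, and the composition $\bar\Phi=\pi\circ\Phi$ with the projection $\pi$ is then an isometric embedding of $(\Omega,d_\Omega)$ into the $(M-1)$-dimensional normed space $(V,\rho)$; injectivity is free, since $\bar\Phi(x)=\bar\Phi(y)$ forces $d_\Omega(x,y)=0$. This realises precisely the improved target dimension announced in the introduction, namely $M-1=N$ when the polytope has $N+1$ facets.

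For (b)$\Rightarrow$(a) I would suppose that $\Omega$ is \emph{not} a polytope and deduce that no isometric embedding $\Phi\colon(\Omega,d_\Omega)\hookrightarrow(V,\|\cdot\|)$ into a finite-dimensional normed space, say $\dim V=m$, can exist. The starting point is that a bounded convex set which is not a polytope has infinitely many extremal points. Fixing any integer $k$ and any $k$ of them, the packing mechanism from the proof of Theorem~\ref{lowerboundasymptoticvolume} produces, for all large enough $R$, $k$ pairwise disjoint metric balls $B_\Omega(p_1,R/4),\dots,B_\Omega(p_k,R/4)$ contained in $B_\Omega(o,R)$, with $d_\Omega(o,p_i)\le 3R/4$ and $d_\Omega(p_i,p_j)\ge R/2$. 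The key observation is that disjointness now transfers to the ambient space for free: since $\Phi$ is isometric, $\|\Phi(p_i)-\Phi(o)\|=d_\Omega(o,p_i)\le 3R/4$ and $\|\Phi(p_i)-\Phi(p_j)\|\ge R/2$, so the norm-balls $B_V(\Phi(p_i),R/4)$ are pairwise disjoint and all contained in $B_V(\Phi(o),R)$. Comparing Lebesgue (Haar) volumes in the fixed $m$-dimensional normed space, any family of disjoint balls of radius $R/4$ inside a ball of radius $R$ has cardinality at most $(R/(R/4))^m=4^m$, a bound \emph{independent of $R$}. Choosing $k=4^m+1$ at the outset therefore yields a contradiction, and $\Omega$ must be a polytope.

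The step I expect to carry the actual geometric content is the realisation of the packing, that is, the transfer of Theorem~\ref{lowerboundasymptoticvolume}'s construction: one must check that $k$ distinct extremal points can indeed be resolved at a common large scale $R$ into $k$ disjoint metric balls of radius $R/4$ with mutually $R/2$-separated centres, which is exactly the divergence-of-rays estimate behind the lower bound on asymptotic volume. Once this is granted, the remainder is genuinely elementary, and it is worth stressing why: the delicate conversion from \emph{intrinsically} disjoint balls to \emph{ambient} disjoint balls, which would be the obstacle in any $n$-dimensional Hausdorff-measure approach and which fails for arbitrary subsets of $V$, costs nothing here because the isometry keeps the centres as far apart in $V$ as they are in $\Omega$. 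Thus the finite dimension of the ambient norm enters only through the uniform packing constant $4^m$, and it is the clash between this fixed constant and the unbounded packing forced by infinitely many extremal points that rules out the embedding.
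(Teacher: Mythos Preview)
Your argument for (a)$\Rightarrow$(b) is correct and is a genuinely different route from the paper's. The paper proves it geometrically: any polytope with $N+1$ facets is affinely equivalent to the intersection of an $N$-simplex $S_N$ with an $n$-plane, and since the Hilbert metric of $S_N$ is isometric to a norm on $\R^N$ (Theorem~\ref{lecasdessimplexes}), restriction gives the embedding. You instead read the embedding straight off Birkhoff's formula via the range seminorm on $\R^M$ modulo the diagonal. Your approach is shorter and purely algebraic; the paper's buys a geometric explanation of \emph{why} the target dimension is $N$ (the polytope literally sits inside a simplex of that dimension). In fact the two constructions produce the same map up to a linear identification, since the simplex isometry of Theorem~\ref{lecasdessimplexes} is itself built from $\log$ of barycentric coordinates.

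Your argument for (b)$\Rightarrow$(a) is also correct and is a close variant of the paper's. Both rest on the same geometric fact, equation~(\ref{eqlimit}): geodesic rays to distinct extremal points diverge linearly. The paper normalises by $t$ and lands $N+1$ points on the unit sphere of $V$ that are $1$-separated, contradicting compactness; you keep the scale $R$ and use a volume packing bound $k\le 4^m$ instead. These are equivalent finite-dimensionality obstructions.

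One point you should tighten: you write ``fixing any integer $k$ and any $k$ of them'', but the divergence estimate~(\ref{eqlimit}) requires that the supporting hyperplanes at the chosen extremal points do not contain the chord joining any two of them, so arbitrary extremal points will not do. The paper handles this by passing to a two-dimensional section of $\Omega$ that is not a polygon and invoking Krein--Milman there to produce arbitrarily many extremal points with the needed transversality; you should insert the same reduction before running the packing.
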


Observe that condition (b) means that there exists a norm $\|\cdot \|$ on $\mathbb{R}^m$ for some $m \in \mathbb{N}$, 
 and a map $f : (\Omega, d_{\Omega}) \to (\mathbb{R}^m, \| \, \|)$ which is an isometry onto its image. 
By Theorem , the image $f (\Omega) \subset \mathbb{R}^m$ is an affine subspace if and
 only if $\Omega$ is a simplex. 

The implication  $(a) \Rightarrow (b)$ is due to Brian Lins who proved it in his dissertation \cite{bclin}.
He used Birkhoff's result Proposition \ref{eqbirkhoff} and obtained  an embedding $f : \Omega \to \mathbb{R}^{N(N+1)/2}$ for the sup norm, where $N$ is one less the number of faces of the polytope. 

A more geometric proof of this implication is to see it as an immediate consequence of Theorem 
\ref{lecasdessimplexes},  
together with
the following result  which states that any bounded polytope is affinely  equivalent to the intersection 
of a simplex 
and an affine subspaces in some vector space  (see also  \cite{grunbaum} Theorem 1 in section 5.1). 
The argument also
reduces the dimension of the ambient space from $N(N+1)/2$ to $N$.

\begin{prop}
\sl  Let ${\mathcal P}$ be a convex and bounded polytope in $\R^n$ with 
$N+1$ $(n-1)$-faces. Then there exists an $N$-simplex $S_{N}$ and an
$n$-dimensional affine space $A_n$ in $\R^N$ such that ${\mathcal P}$ is affinely equivalent to $A_n\cap S_{N}$.
\end{prop}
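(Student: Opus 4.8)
The plan is to realise $\mathcal{P}$ as a slice of the standard simplex in $\R^{N+1}$ by means of its facet functionals, and then to identify the ambient hyperplane with $\R^N$. First I would fix the minimal half-space representation of $\mathcal{P}$: since $\mathcal{P}$ has exactly $N+1$ facets there are non-redundant affine maps $L_i(x)=\langle a_i,x\rangle+b_i$, $1\leq i\leq N+1$, with
\[
\mathcal{P}=\{x\in\R^n\mid L_i(x)\geq 0,\ 1\leq i\leq N+1\},
\]
and, $\mathcal{P}$ being full-dimensional, the normals $a_i$ span $\R^n$.

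The decisive step is to renormalise the $L_i$ so that they sum to the constant function $1$. Because $\mathcal{P}$ is bounded its recession cone $\{v:\langle a_i,v\rangle\geq 0 \text{ for all } i\}$ is trivial, which is equivalent to $0$ belonging to the interior of the convex hull of the $a_i$; from this one extracts weights $\mu_i>0$ with $\sum_i\mu_i a_i=0$. Then $\sum_i\mu_i L_i$ is a constant $c$, and evaluating at an interior point of $\mathcal{P}$, where every $L_i>0$, shows $c>0$. Setting $\tilde L_i:=(\mu_i/c)L_i$ leaves $\mathcal{P}$ unchanged (the factors are positive) and achieves $\sum_i\tilde L_i\equiv 1$.

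Next I would introduce the affine map $\Phi\colon\R^n\to\R^{N+1}$, $\Phi(x)=(\tilde L_1(x),\dots,\tilde L_{N+1}(x))$. Its image lies in the hyperplane $H=\{y:\sum_i y_i=1\}$, and $\Phi$ is injective because its linear part is built from the spanning family $a_i$; hence $A:=\Phi(\R^n)$ is an $n$-dimensional affine subspace of $H$. Since $x\in\mathcal{P}$ exactly when all coordinates of $\Phi(x)$ are nonnegative, one gets
\[
\Phi(\mathcal{P})=A\cap\Delta,\qquad \Delta=\Big\{y\in\R^{N+1}:y_i\geq 0,\ \textstyle\sum_i y_i=1\Big\},
\]
where $\Delta$ is the standard $N$-simplex. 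Finally, choosing any affine isomorphism $\psi\colon H\to\R^N$ and putting $S_N:=\psi(\Delta)$, $A_n:=\psi(A)$ carries the picture into $\R^N$, and $\psi\circ\Phi$ is the sought affine equivalence $\mathcal{P}\cong A_n\cap S_N$.

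I expect the renormalisation to be the only real obstacle: one must produce a strictly positive linear relation among the facet normals, and this is precisely where the boundedness of $\mathcal{P}$ is used, through the chain boundedness $\Rightarrow$ trivial recession cone $\Rightarrow$ $0\in\mathrm{int\,conv}\{a_i\}$ $\Rightarrow$ existence of $\mu_i>0$ with $\sum_i\mu_i a_i=0$. Everything after that is routine affine linear algebra.
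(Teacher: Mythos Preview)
Your argument is correct, and the renormalisation step is exactly right: boundedness gives the strictly positive linear relation $\sum_i\mu_i a_i=0$ (via Gordan/Stiemke, or equivalently via $0\in\mathrm{int\,conv}\{a_i\}$ as you say), and from there the slack map $\Phi$ lands in the standard simplex with image an $n$-flat.

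The paper takes a different route. Instead of mapping $\mathcal P$ into a pre-existing simplex, it \emph{extends} the defining functionals: working in barycentric coordinates it keeps $H_i=L_i$ for $1\leq i\leq n+1$ and sets $H_i=L_i+y_i$ for $n+2\leq i\leq N+1$, then argues that the $N+1$ hyperplanes $\{H_i=0\}$ in $\R^N$ are in general position and hence bound a simplex $S_N$; the slice $\{y_{n+2}=\cdots=y_{N+1}=0\}$ then recovers $\mathcal P$. Your approach is more canonical (the slack map is intrinsic, and the target simplex is the standard one), and it makes the role of boundedness completely transparent --- it is used precisely, and only, to produce the renormalisation. The paper's construction avoids the renormalisation lemma but pays for it by having to verify that the extended system actually cuts out a bounded simplex, a point it handles rather tersely. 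Both yield the same embedding dimension $N$.
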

\begin{proof}
Let $L_i\colon \R^n\to \R$ be affine function, for $1\leq i\leq N+1$ such that
$$
{\mathcal P}=\{x\in \R^n\mid L_i(x)>0, 1\leq i\leq N+1\}\text{.} 
$$
Notice that necessarily $N\geq n$ for the convex to
be bounded. Let the family $(e_i)_{1\leq i\leq n+1}$ be an affine basis of $\R^n$ and let us
suppose that in that basis, one has,  in barycentric coordinates $x=\sum_i x_ie_i$ and $\sum_i x_i=1$, 
$$
L_i(x)=a_1^ix_1+\cdots+a_{n+1}^ix_{n+1}\text{,}
$$
where for each $1\leq i\leq n+1$ the $a_j^i$ are not all equal and,
without loss of generality, we can suppose that the first $n+1$ hyperplanes are affinely independent.

Now let us consider an affine basis $(f_i)_{1\leq i\leq N+1}$ of $\R^N$, and then define the following affine functions
from $\R^N$ to $\R$, with $y=\sum_i y_i f_i$ and ${\sum_i y_i=1}$,
\begin{equation*}
H_i(y)=\\\begin{cases}
  a_1^iy_1+\cdots+a_{n+1}^iy_{n+1} & \text{for } 1\leq i\leq n+1\\
  a_1^iy_1+\cdots+a_{n+1}^iy_{n+1}+y_i & \text{for } n+2\leq i\leq N+1 
\end{cases}\text{.}
\end{equation*}
Then the affine hyperspaces $\{H_i=0\}$ for $1\leq i\leq N+1$ are affinely independant points in the dual space, hence 
$$S_N=\bigl\{ y \mid H_i(y)>0,  1\leq i\leq N+1\bigr\}$$ is an $N$-simplex of $\R^N$.
Now notice that the  intersection of that simplex with the affine space 
$$A_n=\{y \mid y_{n+2}=\cdots=y_{N+1}=0 \}$$  is affinely equivalent to ${\mathcal P}$, using the
map $$(x_1,\ldots,x_{n+1})\mapsto (x_1,\ldots,x_{n+1},0,\ldots,0)\text{.}$$
\end{proof}

\bigskip 

The  implication  $(b) \Rightarrow (a)$ in  Theorem \ref{plongementisometrique} is due to Bruno~Colbois and Patrick~Verovic \cite{cvp2}, who actually proved that if one 
can \textsl{quasi-isometrically} embed a bounded Hilbert geometry $(\Omega,d_\Omega)$ into a finite dimensional normed vector space $(V,||\cdot||)$, 
then the boundary $\partial\Omega$ admits at most a finite number of extremal points.
Let us make a slight variation of  
their proof, assuming an isometric embedding $f\colon(\Omega,d_\Omega)\to (V,||\cdot||)$ is given.  
The proof relies on the following important two facts:
\begin{enumerate}[(i)]
\item The unit sphere of a normed vector space of finite dimension is compact, therefore a 
maximal set of 
$1$-separated points (\textsl{i.e.} a set in which any two distinct points are at distance at least $1$) is finite. Let $N$ be
the cardinality of such a set.
\item \label{limit}  If $x_\infty$ and $y_\infty$ are two extremal points on the boundary  $\partial\Omega$ 
of a Hilbert geometry with supporting hyperplanes not containing the line $(x_\infty y_\infty)$; $o$ a point in  $\Omega$; 
$x(t)$, $y(t)$ two geodesics rays from $o$ to respectively $x_\infty$ and $y_\infty$; then
\begin{equation}\label{eqlimit}
  \lim_{t\to \infty} \dfrac{d_\Omega\bigl(x(t),y(t)\bigr)}{2t}=1\text{.}
\end{equation}
\end{enumerate}

Now let us suppose that the boundary  $\partial\Omega$ admits $N+1$ distinct 
extremal points $x^1_\infty,\ldots,x_\infty^{N+1}$ with suppoting hyperplanes not containing any two of them, 
and let us fix a  point $o$ in $\Omega$ and suppose that the image of $o$ is the origin of $V$.
Let us denote by $x^i(t)$ a geodesic ray from $o$ to $x^i_\infty$.

Then for any positive real number $t\in \R^+_*$ we have on the one hand
\begin{equation}
  \label{eqlimitcons1}
  \left\Vert\frac{f(x^i(t)}{t}\right\Vert=\dfrac{d_\Omega\Bigl(o,f\bigl(x^i(t)\bigr)\Bigr)}{t}=1
\end{equation}
hence $f(x^i(t)/t$ lies on the unit sphere of $(V,||\cdot||)$.
On the other hand, using the formula (\ref{eqlimit}) 
we can find $T\in \R^+$ such that for any $1\leq i<j\leq N+1$  and $t>T$,
\begin{equation}
  \label{eqlimitcons2}
  \left\Vert\frac{f(x^i(t)}{t}-\frac{f(x^j(t)}{t}\right\Vert=\dfrac{d_\Omega\bigl(x^i(t),y^j(t)\bigr)}{t}\geq1\text{.}
\end{equation}

Therefore, for $t>T$, the family $\bigl(f(x^i(t))/t\bigr)$ is a $1$-separated family on the unit sphere,
which admits $N+1$ points. This is in contradiction with the maximality of $N$. 
Hence the boundary $\partial\Omega$ admits 
no more than $N$ such extremal points. 

Now if $\Omega$ is not a polytope, it admits
a two-dimensional section wich is not a polygon, and then by Krein-Millman's Theorem we can
find a sequence of distinct extremal points whose supporting lines 
do not contain any two of them. Hence we cannnot embed it into a normed vector space.

\section{Polytopal Hilbert geometries are bi-Lipschitz to
Euclidean vector spaces}\label{bilipschitz}


\begin{theo}
\sl  An $n$-dimensional polytopal  Hilbert geometry  $({\mathcal P},d_{\mathcal P})$ is bi-Lipschitz equivalent to the
$n$-dimensional Euclidean geometry ($\R^n,\Vert\cdot\Vert)$. In other words, there exists a map 
$\Phi\colon{\mathcal P}\to \R^n$ and a constant $A$ such that for any two points $x$ and $y$ in ${\mathcal P}$,
$$
\frac{1}{A}\cdot\Vert \Phi(x) - \Phi(y) \Vert \leq d_{\mathcal P}(x,y)\leq A\cdot\Vert \Phi(x)-\Phi(y)\Vert\text{.}
$$
\end{theo}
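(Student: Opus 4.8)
The plan is to reduce the statement to a uniform comparison of Finsler norms in a single global chart. Since, as recalled in the excerpt, $d_{\mathcal P}$ is the length distance of the Finsler metric $F_{\mathcal P}$ and the Euclidean distance is the length distance of $\Vert\cdot\Vert$, it suffices to produce a diffeomorphism $\Phi\colon{\mathcal P}\to\R^n$ together with a constant $A$ such that
$$
\frac1A\,\Vert d\Phi_p(v)\Vert\le F_{\mathcal P}(p,v)\le A\,\Vert d\Phi_p(v)\Vert\qquad\text{for all }p\in{\mathcal P},\ v\in\R^n.
$$
Indeed, integrating this inequality along curves shows that $\Phi$ multiplies lengths by a factor in $[1/A,A]$, and taking infima over the curves joining two given points turns the length comparison into the desired comparison of distances. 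So the whole problem becomes the construction of a well-adapted chart.

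The starting point for the chart is Birkhoff's formula (Proposition \ref{eqbirkhoff}). Writing ${\mathcal P}=\{L_i>0,\ 1\le i\le N\}$ and differentiating that formula yields the infinitesimal identity
$$
F_{\mathcal P}(p,v)=\frac12\Bigl(\max_{1\le i\le N}\frac{dL_i(v)}{L_i(p)}-\min_{1\le j\le N}\frac{dL_j(v)}{L_j(p)}\Bigr),
$$
so that the map $\Psi(x)=\bigl(\log L_1(x),\ldots,\log L_N(x)\bigr)$ sends $F_{\mathcal P}$ to the oscillation seminorm $w\mapsto \max_i w_i-\min_j w_j$ on $\R^N$; this is exactly the infinitesimal form of the isometric embedding of Theorem \ref{plongementisometrique}. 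The image $M=\Psi({\mathcal P})$ is a smooth $n$-dimensional submanifold of the quotient $\R^N/\langle(1,\ldots,1)\rangle\cong\R^{N-1}$, on which the oscillation norm, being a finite dimensional norm, is bi-Lipschitz to the Euclidean one. Thus everything is reduced to flattening $M$, that is, to producing a diffeomorphism from $M$ onto $\R^n$ whose differential is uniformly bounded above and below with respect to the Euclidean structures of $T_pM$ and of $\R^n$.

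For the model case this flattening is trivial: when ${\mathcal P}$ is a simplex the map $\Psi$ is, up to the quotient, the global isometry of Theorem \ref{lecasdessimplexes}, cf. (\ref{eqformultriangle}), its image $M$ is a whole affine subspace, and a single linear projection does the job, recovering the fact that a simplex is isometric to a normed space and hence bi-Lipschitz to $(\R^n,\Vert\cdot\Vert)$. For a general polytope one cannot expect a single linear projection to flatten $M$: as $p$ runs over ${\mathcal P}$ and tends to the various faces, the tangent planes $T_pM$ sweep out a compact but genuinely $n$-parameter family in the Grassmannian, and a dimension count shows this family may meet every candidate kernel, so that no fixed projection stays uniformly non-degenerate on all of it. The remedy is to cut ${\mathcal P}$ into a compact core, on which uniform comparison is automatic by continuity and compactness of $F_{\mathcal P}$, together with finitely many unbounded regions indexed by the maximal conical flags of ${\mathcal P}$. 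In the region escaping along a given flag, only the facet functions $L_i$ incident to that flag tend to $0$, the others staying pinched between two positive constants; hence there the oscillation in the formula above is controlled by the incident facets alone and $F_{\mathcal P}$ becomes uniformly comparable to the Finsler metric of the simplex they span, which is the model treated above. One therefore flattens each region by the projection adapted to its flag and interpolates over the core.

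The main obstacle is the uniformity and the gluing of these regional charts into one global diffeomorphism $\Phi$. One must check that the comparison constants in each unbounded region stay bounded all the way to the face at infinity, where several functions $L_i$ tend to $0$ at different rates and the planes $T_pM$ only converge in the limit; here the third requirement in the definition of a conical flag, that no extra $k$-face of the ambient simplex lie in the interior of a $k$-conical face, is what guarantees that the regions are exhaustive and that their limiting tangent planes are separated enough for the adapted projections to remain uniformly non-degenerate. Once the regional charts are shown to agree up to bounded distortion on their overlaps, they can be assembled, through a partition of unity on the base whose derivatives contribute only bounded terms precisely because the competing charts already differ by bounded factors, into a global diffeomorphism $\Phi$ satisfying the displayed two-sided Finsler estimate, and the reduction of the first paragraph then yields the theorem.
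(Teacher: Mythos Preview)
Your sketch isolates the right mechanism: on a region escaping along a fixed maximal flag, the Hilbert metric of ${\mathcal P}$ is uniformly comparable to that of the simplex spanned by the facets incident to that flag, and on a simplex the explicit isometry of Theorem~\ref{lecasdessimplexes} does the flattening. This is precisely the content of the paper's key comparison, Theorem~\ref{mainprop}, and your decomposition into a compact core plus unbounded regions indexed by conical flags is, up to bookkeeping, the paper's barycentric subdivision into conical flag neighbourhoods.

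The genuine gap is the gluing. You propose to assemble the regional charts by a partition of unity, arguing that the derivatives of the cutoffs contribute only bounded terms ``because the competing charts already differ by bounded factors''. But boundedness of each summand gives no lower bound on the differential of the combination: if $\phi_1,\phi_2$ are two bi-Lipschitz charts on an overlap, $\lambda\phi_1+(1-\lambda)\phi_2$ need not be injective, and $\lambda\,d\phi_1(v)+(1-\lambda)\,d\phi_2(v)$ can vanish even when both terms are large (take $\phi_1=\mathrm{id}$, $\phi_2=-\mathrm{id}$ in $\R^1$). To make such an argument work you would need the transitions $\phi_2\circ\phi_1^{-1}$ to be $C^1$-close to the identity on the overlaps, not merely bi-Lipschitz, and nothing in your construction arranges that. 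Global injectivity of the assembled map is likewise unaddressed.

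The paper avoids this by using a closed decomposition rather than an open cover: the barycentric simplices $S_i$ of ${\mathcal P}$ meet only along shared faces; each is sent by a \emph{linear} map to the standard barycentric simplex of the $n$-simplex (bi-Lipschitz by Theorem~\ref{mainprop}), and then by de~la~Harpe's isometry $\Phi_n$ onto a cone in $\R^n$. These cones form a fan that tiles $\R^n$, and because the maps are linear on each piece they agree exactly on the shared faces, so no averaging is needed and the global map is a bijection by construction. The paper also records Bernig's alternative $\Phi_b(x)=\sum_i\log L_i(x)\,dL_i$ into $(\R^n)^*$, which is close in spirit to your $\Psi$; there the Lipschitz upper bound is immediate, but the hard step is proving the map is \emph{onto}, again a global issue your sketch does not resolve.
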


This theorem was proved by Bruno~Colbois, Patrick~Verovic and Constantin~Vernicos in dimension $2$ \cite{cvv4}, and
independently by Andreas~Bernig \cite{andreas} and by the author~\cite{ver8} in all dimensions.

Both proofs consist in building a bi-Lipschitz map. A.~Bernig shows that if the convex polytope is
defined by $N$ affine maps $L_1,\ldots,L_N$ as follows 
$$
{\mathcal P}=\{x\mid L_i(x)>0, 1\leq i\leq N\}\text{,}
$$ 
then the map
$$x\mapsto \Phi_b(x)=\sum_{i=1}^N \log L_i(x)\cdot dL_i$$
 is a bi-Lispchitz map onto the dual vector space (notice that the linear part of $L_i$ coincides with $dL_i$). 
This map is easily seen to be Lipschitz continuous. The difficult part in A.~Bernig's proof is to show 
that this map is onto.
 
Our construction is more geometric and the map we build is easily seen to be a bijection. The tricky
part is to prove that it is a bi-Lipschitz map. Our proof recedes through  the following four steps:

\begin{enumerate}[(i)]
\item Using the barycentric subdivision, 
we decompose a polytopal $\cal P$ domain of $\R^n$ 
into a finite number of simplices $S_i$, which
we call \textsl{barycentric simplexes} and which happen to be conical 
flag neighborhoods of the polytope.
\begin{figure}[h]
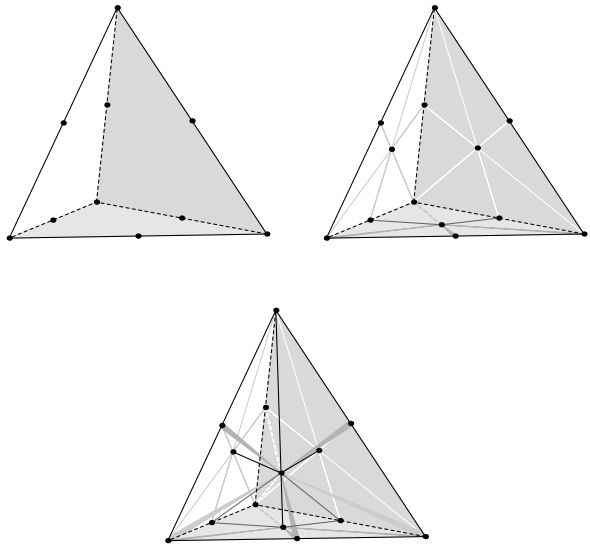

 $$\includegraphics[scale=.4]{polygones.4}\qquad\includegraphics[scale=.4]{polygones.5}
$$

$$
\includegraphics[scale=.4]{polygones.6}
$$

  \caption{The last three steps of the decomposition in dimension $3$}
  \label{figcelldecomp}
\end{figure}

\item  The second steps consists in proving that each simplex $S_i$ admits 
a bi-lipschitz embedding $L_i$
onto a \textsl{fixed} or \textsl{standard} barycentric simplexe of the $n$-simplex. 
The map is a linear one, the difficult part is to prove that it-is bi-lipschitz.
\begin{figure}[h]
  \centering
  \includegraphics[scale=.6]{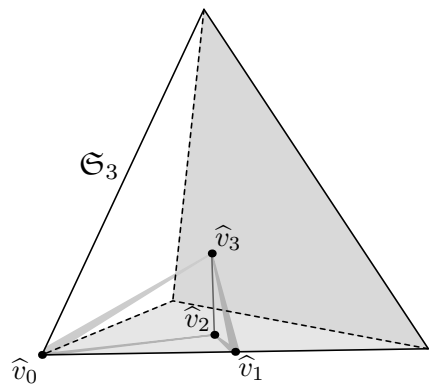}
  \caption{The standard barycentric $3$-simplex of the $3$-simplex}
  \label{figcellsimplex}
\end{figure}

\item  In the third step, 
We show that we can send isometrically the barycentric simplex of an $n$-simplex
onto a cone of a vector space $W_n$, using P.~de~La~Harpe's map $\Phi_n$ 
between the $n$-simplex and $W_n$ . This cone is then sent in a bi-lipschitz way
to the cone associated to a barycentric simplex of a polytope thanks to the inverse of
the map $L_i$ denoted by $M_i$ in the figure \ref{lastfig}.
\begin{figure}[h] 
  \centering
 \includegraphics[scale=.45]{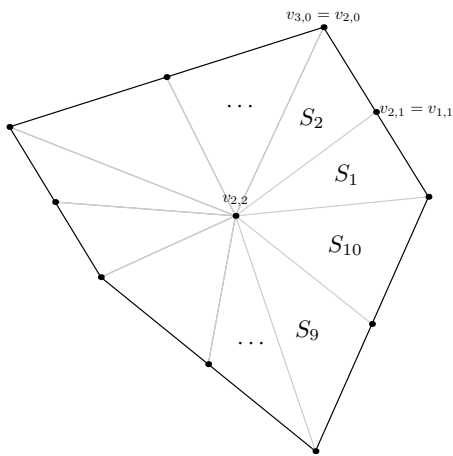}

  \caption{Barycentric simplices of a polygon\label{dintroo}}
\end{figure}

\begin{figure}[h] 
  \centering
  \includegraphics[scale=.5]{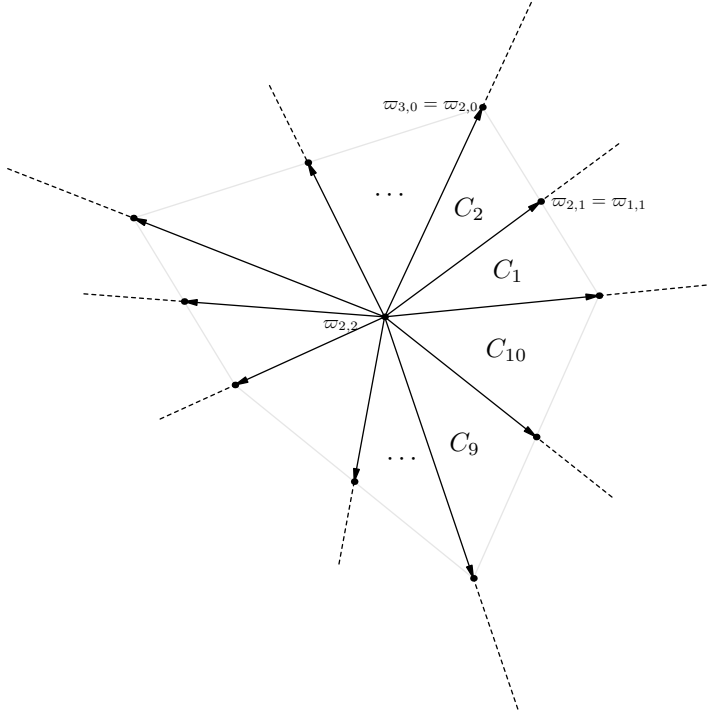}

  \caption{Barycentric cones of a polygon\label{dintrot}}
\end{figure}

\item Finally this allows us to define a map from the polytopal 
domain to $\R^n$ by patching the bi-Lipschitz embeddings associated to each 
of its barycentric simplices.

\begin{figure}[h]
  \centering
  \includegraphics[scale=.45]{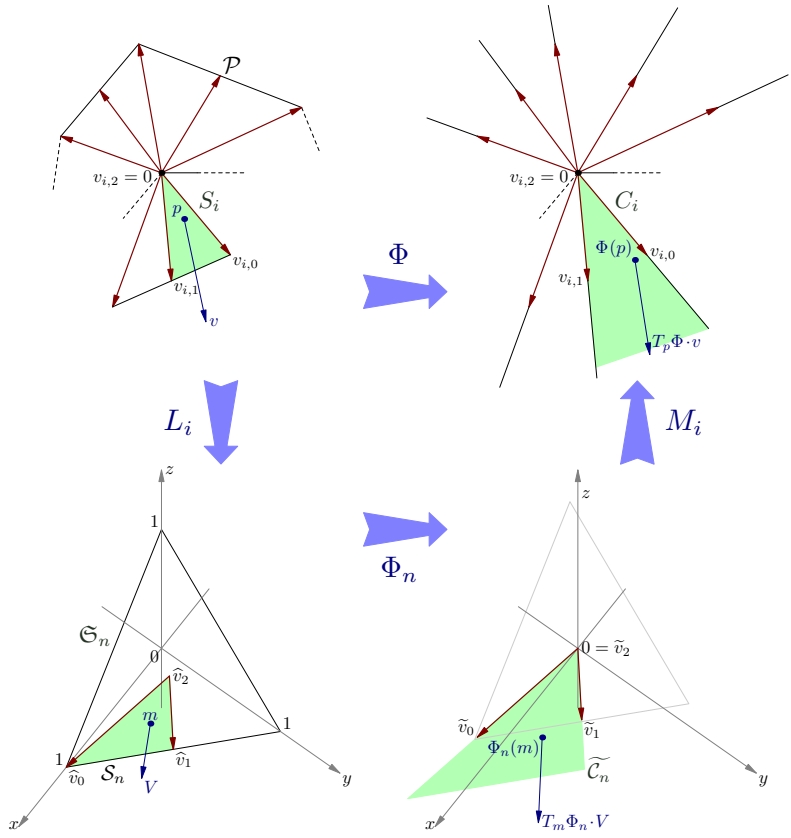}
  \caption{The application $\Phi$ in dimension $2$ illustrated}
\label{lastfig}
\end{figure}
\end{enumerate}

Let us finish by stating the main ingredient of this proof
which is a comparison theorem and which is interesting on its own:

 As in  formula (\ref{eqhilbertisfinsler}) we denote by $F_\mathcal{C}$ the finsler metric associated
 to the convex set $\mathcal{C}$.

 \begin{theo}\label{mainprop}
 \sl  Let $\mathcal{A}$ and $ \mathcal{B}$ be two convex sets with a common conical flag neighborhood $\mathcal{S}$.
 There exists a constant $C$ such that for any $x\in \mathcal{S}$ and $v\in\R^n$ one has
 \begin{equation}
   \frac{1}{C}\cdot F_\mathcal{B}(x,v)\leq F_\mathcal{A}(x,v)\leq C\cdot F_\mathcal{B}(x,v)\text{.}
 \end{equation}
 \end{theo}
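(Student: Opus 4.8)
The two bounds being symmetric in $\mathcal{A}$ and $\mathcal{B}$, the plan is to produce a single constant $C$ with $F_\mathcal{A}(x,v)\le C\,F_\mathcal{B}(x,v)$ for all $x\in\mathcal{S}$ and all $v$. Since $F_\mathcal{C}(x,v)$ depends only on the two distances $\|x-x^+\|$ and $\|x-x^-\|$ from $x$ to $\partial\mathcal{C}$ along $\pm v$, everything reduces to comparing these boundary distances for $\mathcal{A}$ and for $\mathcal{B}$. The starting point is a combinatorial consequence of the hypothesis: because $\mathcal{S}$ is a common conical flag neighborhood, the traces of the two boundaries on the closed simplex coincide, $\partial\mathcal{A}\cap\overline{\mathcal{S}}=\partial\mathcal{B}\cap\overline{\mathcal{S}}$, this common trace being the top conical facet carried by $f_{n-1}$ together with the lower flag faces $f_0<\dots<f_{n-2}$. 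Consequently the facets of $\mathcal{S}$ split into two classes: the single \emph{conical} facet, which lies simultaneously on $\partial\mathcal{A}$ and $\partial\mathcal{B}$, and the remaining \emph{interior} facets, whose relative interiors lie in $\mathrm{int}\,\mathcal{A}\cap\mathrm{int}\,\mathcal{B}$.

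First I would fix $x\in\mathrm{int}\,\mathcal{S}$ and a direction $v$, and follow the two rays $x\pm tv$ until they first leave $\mathcal{S}$. If such a ray leaves through the conical facet, its exit point already lies on both $\partial\mathcal{A}$ and $\partial\mathcal{B}$, so the corresponding boundary distance is \emph{identical} for the two sets and contributes equally to $F_\mathcal{A}$ and to $F_\mathcal{B}$. If instead it leaves through an interior facet, it re-enters $\mathrm{int}\,\mathcal{A}\cap\mathrm{int}\,\mathcal{B}$ and reaches $\partial\mathcal{A}$, respectively $\partial\mathcal{B}$, only further out, along the parts of the two boundaries that are allowed to differ; it is exactly these contributions that remain to be estimated.

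The hard part will be to bound these remaining contributions uniformly, and the only real danger is near the low-dimensional flag faces, where a boundary distance through an interior facet may itself tend to $0$ while $x$ stays inside $\mathcal{S}$. There I would pass to the projective, cone-theoretic description: near a $k$-dimensional flag face the metric is governed by the cone transverse to that face, one of whose walls is the shared conical hyperplane while its other walls are cut out by the non-shared boundary. The decisive point, which is precisely what the conical flag hypothesis provides, is that the interior facets of $\mathcal{S}$ keep the whole simplex uniformly away \emph{in direction} from those non-shared walls, so that a point of $\mathcal{S}$ never sees them under a grazing angle. A direct cross-ratio computation in such a transverse cone, analogous to the one behind formula (\ref{eqcrofton}), then shows that the two competing contributions have a ratio trapped between two positive constants depending only on the angular aperture of $\mathcal{S}$ relative to the walls. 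Feeding these comparisons direction by direction into the defining formula (\ref{eqhilbertisfinsler}) yields $F_\mathcal{A}\le C\,F_\mathcal{B}$, and hence the theorem.

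I expect the genuinely delicate step to be this last uniform estimate near the flag faces, and in particular the verification that the required angular separation holds simultaneously at every stratum $f_0<f_1<\dots<f_{n-1}$. The natural way to organise it is by induction on the dimension of the flag faces, or equivalently by normalising $\mathcal{S}$ to a standard barycentric simplex so that the transverse cones at each stratum take a fixed shape and the constants become explicit. Away from a neighbourhood of $\partial\mathcal{S}$ all distances are bounded and bounded below, so the comparison is automatic there; all the work is concentrated in these conical corners.
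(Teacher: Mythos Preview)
The paper does not prove this theorem. It is stated at the end of Section~\ref{bilipschitz} as ``the main ingredient'' of the bi-Lipschitz characterisation, illustrated by Example~\ref{exeight}, and then the chapter ends; the argument itself is deferred to~\cite{ver8}. There is therefore no proof in this paper to compare your proposal against.

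Taken on its own, your outline points in the right direction. The identification $\partial\mathcal{A}\cap\overline{\mathcal{S}}=\partial\mathcal{B}\cap\overline{\mathcal{S}}=\overline{f_{n-1}}$ is the correct structural starting point, and the dichotomy between rays leaving $\mathcal{S}$ through the conical facet (where the two boundary distances coincide) versus through an interior facet (where they must be compared) is the natural way to organise the estimate. You have also correctly located the only genuine difficulty: uniformity of the comparison as $x$ approaches the lower strata $f_0<\dots<f_{n-2}$, where both competing boundary distances may tend to~$0$.

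What you have written, however, is a plan rather than a proof. The sentence ``a direct cross-ratio computation in such a transverse cone\dots shows that the two competing contributions have a ratio trapped between two positive constants'' is essentially a restatement of the theorem in the transverse picture, not a demonstration of it; the ``angular separation'' that the conical-flag hypothesis is supposed to guarantee is asserted but not established; and the passage from a single two-dimensional cone computation of the type~(\ref{eqcrofton}) to the full $n$-dimensional statement, with constants uniform over all directions $v$ and gluing across the different strata $f_k$, is precisely where the work lies. Your proposal names that work honestly but does not carry it out.
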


 \begin{ex}\label{exeight}
 In the two-dimensional case the condition is that $\mathcal{A}$ and 
$\mathcal{B}$ contain
 a triangle $\mathcal{S}$ with one of its edges on their boundaries, 
a unique vertex of which is an extremal point of both of them where they fail to be $C^1$ (see figure \ref{figexeight}).
 \begin{figure}[H]
   \centering

   \includegraphics[scale=.5]{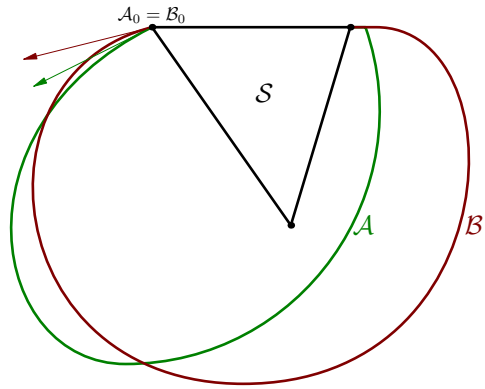}
   \caption{Illustration of Example \ref{exeight}}
   \label{figexeight}
 \end{figure} 
 \end{ex}

\def\cprime{$'$}
\providecommand{\bysame}{\leavevmode\hbox to3em{\hrulefill}\thinspace}
\providecommand{\MR}{\relax\ifhmode\unskip\space\fi MR }
\providecommand{\MRhref}[2]{%
  \href{http://www.ams.org/mathscinet-getitem?mr=#1}{#2}
}
\providecommand{\href}[2]{#2}


\begin{thebibliography}{99}

 \bibitem{alexander} J.~R.~Alexander,
 \newblock Planes for which the lines are the shortest paths between points.
 \newblock \textsl{Illinois J. Math.} 22 (1978), 170--190. 

 \bibitem{abf} R.~Alexander,  I.~D.~Berg,  and  R.~L.~Foote,  
 \newblock Integral-geometric formulas for perimeter in $S^2$, $H^2$ and
  Hilbert planes.
 \newblock \textsl{Rocky Mountain J. Math.}  35 (2005), no. 6, 1825--1859.

 \bibitem{alvarez_fernandez} \bgroup J.~C.~Álvarez~Paiva\egroup{} and \bgroup E.~Fernandes\egroup{},
 \newblock Crofton formulas in projective Finsler spaces.  
 \newblock \textsl{Electron. Res. Announc. Amer. Math. Soc.}  4 (1998), 91--100.

 \bibitem{be}
 \bgroup Y.~Benoist\egroup{}, \newblock Convexes hyperboliques et fonctions quasi symé­tri­ques.
 \newblock \textsl{Publ. Math. Inst. Hautes Études Sci.} 97 (2003), 181--237.

 \bibitem{benoist06}
 Y.~Benoist,
 \newblock Convexes hyperboliques et quasiisométries. (Hyperbolic convexes
   and quasiisometries.).
 \newblock \textsl{Geom. Dedicata.} 122 (2006), 109--134.

\bibitem{benzecri}
	J.-P.~Benz{é}cri,
	\newblock Sur les variétés localement
		  affines et localement projectives, 
		  \newblock \textsl{Bull. Soc. Math. France} 88 (1960), 229--332.

 \bibitem{berckver}
 G.~Berck, A.~Bernig and C.~Vernicos, {V}olume entropy of {H}ilbert
   {G}eometries. \textsl{Pacific. J. of Math.} 245 (2010), no. 2, 201--225.

 \bibitem{andreas}
 A.~Bernig, {{H}ilbert {G}eometry of {P}olytopes}. \textsl{Archiv der Mathematik} 
 92 (2009), 314--324.

 \bibitem{birkhoff}
 G.~Birkhoff, \newblock Extensions of Jentzsch's theorem. 
 \newblock \textsl{Trans. Amer. Math. Soc.} 85 (1957), 219--227.


  \bibitem{cvc}
  B.~Colbois and C.~Vernicos, Bas du spectre et delta-hyperbolicité
    en géométrie de {H}ilbert. \textsl{Bulletin de la Société
 Mathématique de France.} 134 (2006), 357--381.


  \bibitem{cvc2} B.~Colbois and C.~Vernicos, 
 \newblock Les g{é}om{é}tries de  Hilbert sont {à} g{é}om{é}trie locale born{é}e.
 \newblock \textsl{Annales de l'Institut Fourier} 57 (2007), no. 4, 1359--1375.


  \bibitem{cvv}
  B.~Colbois, C.~Vernicos, and P.~Verovic, Area of {I}deal {T}riangles and
    {G}romov {H}yperbolicity in {H}ilbert {G}eometries.
  \newblock \textsl{Illinois Journal of Mathematics,} 52 (2008), no. 1, 319--343.


  \bibitem{cvv4}
  B.~Colbois, C.~Vernicos, and P.~Verovic, Hilbert geometry for convex polygonal domains.
  \newblock \textsl{Journal of Geometry.} 100 (2011), 37--64.

  \bibitem{cvp}
  B.~Colbois and P.~Verovic, Hilbert geometry for strictly convex domains.
  \newblock \textsl{Geom. Dedicata.} 105 (2004), 29--42.


  \bibitem{cvp2}
  B.~Colbois and P.~Verovic, Hilbert domains quasi-isometric to normed vector spaces.
  Preprint, 2008; arXiv:0804.1619v1 [math.MG].

  \bibitem{fk}
  T.~Foertsch and A.~Karlsson, Hilbert Geometries and Minkowski norms.
  \newblock \textsl{Journal of Geometry}, 83 (2005), no. 1-2, 22--31.

  \bibitem{grunbaum} B.~Grünbaum, Convex polytopes. With the cooperation of Victor Klee, M. A. Perles and G. C. Shephard. 
\textsl{Pure and Applied Mathematics}, Vol. 16 (New York, 1967), Interscience Publishers John Wiley \& Sons, Inc., 1967,  xiv+456 pp. 

  \bibitem{dlharpe}
  P.~de~la Harpe, On {H}ilbert's metric for simplices.
  \newblock In \textsl{Geometric group theory, Vol.\ 1} (Sussex, 1991), Cambridge Univ. Press, Cambridge, 1993, 97--119.


  \bibitem{kn}
  A.~Karlsson and G.~A. Noskov, The {H}ilbert metric and {G}romov hyperbolicity.
  \newblock \textsl{Enseign. Math. (2)}. 48 (2002), no. 1-2, 73--89.


  \bibitem{kay67}
  D.~C. Kay,
  \newblock The ptolemaic inequality in {H}ilbert geometries.
  \newblock \textsl{Pacific J. Math.} 21 (1967), 293--301.

  \bibitem{bclin} B.~C. Lins, \textsl{Asymptotic behavior and {D}enjoy-{W}olff theorems for {H}ilbert metric nonexpansive maps}, PhD dissertation, Rutgers University, 2007.

  \bibitem{lw} B. Lemmens and C. Walsh, Isometries of polyhedral Hilbert geometries.
  \newblock \textsl{Journal of Topology and Analysis.} 3 (2011), no. 2, 213--241.

\bibitem{marquis} L. Marquis, Around groups in Hilbert geometry.
\newblock \textsl{Handbook of Hilbert Geometry} ????????.

\bibitem{nussbaum} R.~D.~Nussbaum, Hilbert's projective metric and iterated nonlinear maps. 
\newblock \textsl{Mem. Amer. Math. Soc.},  75  (1988),  no. 391, iv+137 pp.

  \bibitem{so}
  E.~Soci{é}-M{é}thou, Caractérisation des ellipsoïdes par leurs groupes d'automorphismes.
  \newblock \textsl{Ann. Sci. École Norm. Sup. (4)}, 35 (2002), no. 4, 537--548.

  \bibitem{so2}
  E.~Soci{é}-M{é}thou, Behaviour of distance functions in {H}ilbert-{F}insler geometry.
  \newblock \textsl{Differential Geom. Appl.} 20 (2004), no. 1, 1--10.


  \bibitem{schneider} R. Schneider, 
  \newblock Crofton Measures in Polytopal Hilbert Geometries.
  \newblock  \textsl{Beiträge Algebra Geom.} 47 (2006), no. 2, 479--488.

  \bibitem{ver9} C.~Vernicos,
  Spectral Radius and Amenability in Hilbert Geometries.
  \newblock \textsl{Houston journal of Maths.} 35 (2009), no. 4, 1143-1169.

  \bibitem{ver8}  C.~Vernicos,
  Lipschitz characterisation of Polytopal Hilbert Geometries.
  \newblock to appear in \textsl{Osaka Journal of Maths}; arXiv:0812.1032v1.

  \bibitem{ver10} C.~Vernicos,
  Asymptotic volumes of Hilbert geometries. 
  \newblock \textsl{Indiana journal of Maths.} 62 (2013), no 5, 1431--1441.

  \bibitem{ver11} C.~Vernicos,
  Approximability of convex bodies and volume entropy of Hilbert geometries. 
  \newblock Preprint 2012;  arXiv:1207.1342.

  \bibitem{walsh}  C. Walsh, The horofunction boundary of the Hilbert geometry.
  \newblock \textsl{Advances in Geometry} 8 (2008), no. 4, 503--529.

   \bibitem{walsh2}  C. Walsh, The   horofunction boundary and isometry group of the Hilbert geometry.
\newblock \textsl{Handbook of Hilbert Geometry} ?????? .



\end{thebibliography}
\end{document}
